\title{Multiple Packing:\\Lower Bounds via Infinite Constellations}
\author{
\IEEEauthorblockN{
Yihan Zhang\IEEEauthorrefmark{1}
Shashank Vatedka\IEEEauthorrefmark{2}
}\\
\IEEEauthorblockA{
\IEEEauthorrefmark{1}Institute of Science and Technology Austria \\
\IEEEauthorrefmark{2}Department of Electrical Engineering, Indian Institute of Technology Hyderabad
}
}
\begin{document}
\maketitle

\begin{abstract}
We study the problem of high-dimensional multiple packing in Euclidean space. Multiple packing is a natural generalization of sphere packing and is defined as follows. Let $ N>0 $ and $ L\in\mathbb{Z}_{\ge2} $. A multiple packing is a set $\mathcal{C}$ of points in $ \mathbb{R}^n $ such that any point in $ \mathbb{R}^n $ lies in the intersection of at most $ L-1 $ balls of radius $ \sqrt{nN} $ around points in $ \mathcal{C} $. 
Given a well-known connection with coding theory, multiple packings can be viewed as the Euclidean analog of list-decodable codes, which are well-studied for finite fields. 
In this paper, we derive the best known lower bounds on the optimal density of list-decodable infinite constellations for constant $L$ under a stronger notion called average-radius multiple packing. 
To this end, we apply tools from high-dimensional geometry and large deviation theory. 
\end{abstract}



\section{Introduction}
\label{sec:intro}
We {study} the problem of \emph{multiple packing} {in Euclidean space}, a natural generalization of the sphere packing problem~\cite{conway-sloane-book}. 
Let $ N>0 $ and $ L\in\bZ_{\ge2} $. 
We say that a point set $\cC$ in $ \bR^n $ forms a \emph{$(N,L-1)$-multiple packing}\footnote{We choose to stick with $L-1$ rather than $L$ for notational convenience. 
This is because in the proof, we need to examine the violation of $(L-1)$-packing, i.e., the existence of an $L$-sized subset that lies in a ball of radius $\sqrt{nN}$. } if any point in $ \bR^n $ lies in the intersection of at most $L-1$ balls of radius $ \sqrt{nN} $ around points in $\cC$. 
Equivalently, the radius of the smallest ball containing any size-$L$ subset of $\cC$ is larger than $ \sqrt{nN} $. 
This radius is known as the \emph{Chebyshev radius} of the $L$-sized subset. 
If $ L = 2 $, then $\cC$ forms a \emph{sphere packing}, i.e., a point set such that balls of radius $ \sqrt{nN} $ around points in $\cC$ are disjoint, or equivalently, the pairwise distance of points in $\cC$ is larger than $ 2\sqrt{nN} $. 
The density of $\cC$ is measured by  \emph{rate} (a.k.a.\ the \emph{normalized logarithmic density (NLD)}) defined as\footnote{Logarithms to the base $e$ are denoted by $\ln(\cdot)$. }
\begin{align}
R(\cC) \coloneqq \limsup_{K\to\infty}\frac{1}{n}\ln\frac{\card{\cC\cap[-K,K]^n}}{\card{[-K,K]^n}}, 
\label{eqn:intro-rate-unbdd}
\end{align}
i.e., the (normalized) number of points per volume. 
Denote by $ C_{L-1}(N) $ the largest rate of a $(N,L-1)$-multiple packing as $ n\to\infty $. 
The goal of this paper is to advance the understanding of $ C_{L-1}(N) $.

The problem of multiple packing is closely related to the \emph{list-decoding} problem \cite{elias-1957-listdec,wozencraft-1958-listdec} in coding theory. 
Indeed, a multiple packing can be seen exactly as the \emph{Euclidean} analog of a list-decodable code.
We will interchangeably use the terms ``packing'' and ``code'' to refer to the point set of interest. 
To see the connection, note that if any point in a multiple packing is transmitted through an adversarial channel that can inflict an arbitrary additive noise of length at most $ \sqrt{nN} $, then given the distorted transmission, one can decode to a list of the nearest $L-1$ points which is guaranteed to contain the transmitted one. 
The quantity $ C_{L-1}(N) $ can therefore be interpreted as the capacity of this channel (in the sense of Poltyrev \cite{poltyrev1994coding}). 
Moreover, list-decodable codes can be turned into unique-decodable codes with the aid of side information such as common randomness shared between the transmitter and receiver \cite{langberg-focs2004,sarwate-thesis,bhattacharya2019shared}. 
List-decoding also serves as a proof technique towards unique-decoding in various communication scenarios; see, e.g., \cite{zhang-quadratic-arxiv,zhang-2020-twoway}.

For $L=2$, the sphere packing problem 
has a long history since at least the Kepler conjecture \cite{kepler-1611} in 1611. 
The best known lower bound is due to Minkowski \cite{minkowski-sphere-pack} using a straightforward volume packing argument. 
The best known upper bound is obtained by reducing it to the bounded case (i.e., packing points in a ball rather than in $ \bR^n $) for which we have the Kabatiansky--Levenshtein linear programming-type bound \cite{kabatiansky-1978}. 
For $ L>2 $, Blinovsky \cite{blinovsky-2005-random-packing} claimed a lower bound by analyzing an (expurgated) Poisson Point Process (PPP). 
However, we noticed some gaps in the proof (see \Cref{sec:bli-mistake-ppp}). 
In this work we use a different approach to construct an unbounded packing which achieves the same lower bound as claimed in \cite{blinovsky-2005-random-packing}. 
The paper \cite{blinovsky-2005-random-packing} also presented an Elias--Bassalygo-type bound without a proof. 
A complete proof of it can be found in \cite{zhang-split-misc}. 

For the multiple packing problem with $ L>2 $, many existing lower bounds 
are obtained under a stronger notion known as the \emph{average-radius} multiple packing (see \Cref{def:avg-rad-multi-pack-unbounded} for the exact definition). 
A set $ \cC $ of $ \bR^n $-valued points is called an average-radius multiple packing if for any $(L-1)$-subset of $\cC$, the maximum distance from any point in the subset to the centroid of the subset is less than $ \sqrt{nN} $. 
Here the centroid of a subset is defined as the average of the points in the subset. 
Denote by $ \ol C_{L-1}(N) $ the largest density of average-radius multiple packings.
In fact, we study this stronger notion of multiple packing in the present paper.
For any finite $ L\in\bZ_{\ge2} $, it is unknown whether the largest multiple packing density under the regular notion is the same as that under the average-radius variant. 

For $ L\to\infty $, Zhang and Vatedka \cite{zhang-vatedka-2019-listdecreal} determined the limiting value of $ C_{L-1}(N) $. 
It follows from results in this paper that $ \ol C_{L-1}(N) $ converges to the same value as $ L\to\infty $. 

Very little is known about structured packings. 
Grigorescu and Peikert \cite{grigorescu-peikert-2012-list-dec-barnes-wall} initiated the study of list-decodability of lattices. 
See also the recent work \cite{mook-peikert-2020-lattice} by Mook and Peikert. 
Zhang and Vatedka \cite{zhang-vatedka-2019-listdecreal} had results on list-decodability of random lattices. 

\subsection*{Relation to conference version}
This work was presented in part at the 2022 IEEE International Symposium on Information Theory~\cite{zhang-ppp-isit}. 
\cite{zhang-ppp-isit} only contains the proof of \Cref{eqn:compare-lb-ppp} using PPPs. 
In the current paper, the same result is obtained via infinite constellations whose analysis is simpler and more transparent. 
Furthermore, results on fundamental properties of different notions of packing density and radius are presented.

\section{Related works}
\label{sec:related}
For $L=2$, the problem of sphere packing has a long history and has been extensively studied, especially for small dimensions. 
The largest packing density is open for almost every dimension, except for $ n = 1 $ (trivial), $2$ (\cite{thue1911-2dspherepacking,toth-1940-2dspherepacking}), $ 3 $ (the Kepler conjecture, \cite{hales1998kepler,hales2017formal}), $ 8 $ (\cite{viazovska-2017-8dspherepacking}) and $24$ (\cite{cohn-2017-24spherepacking}). 
For $ n\to\infty $, the best lower and upper bounds remain the trivial sphere packing bound \cite{minkowski-sphere-pack} and Kabatiansky--Levenshtein's linear programming bound \cite{kabatiansky-1978}. 
This paper is only concerned with (multiple) packings in high dimensions and we measure the density in the normalized way as mentioned in \Cref{sec:intro}. 

There is a parallel line of research in combinatorial coding theory. 
Specifically, a uniquely-decodable code (resp. list-decodable code) is nothing but a sphere packing (resp. multiple packing) which has been extensively studied for $ \bF_q^n $ equipped with the Hamming metric. 
Empirically, it seems that the problem is harder for smaller field sizes $q$.

We first list the best known results for sphere packing (i.e., $L=2$) in Hamming spaces. 
For $q=2$, the best lower and upper bounds are the Gilbert--Varshamov bound \cite{gilbert1952,varshamov1957} proved using a trivial volume packing argument and the second MRRW bound \cite{mrrw2} proved using the seminal Delsarte's linear programming framework \cite{delsarte-1973}, respectively. 
Surprisingly, the Gilbert--Varshamov bound can be improved using algebraic geometry codes \cite{goppa1977,tvz} for $ q\ge49 $. 
Note that such a phenomenon is absent in $ \bR^n $; as far as we know, no algebraic constructions of Euclidean sphere packings are known to beat the greedy/random constructions. 
For $ q\ge n $, the largest packing density is known to exactly equal the Singleton bound \cite{komamiya1953singleton,joshi1958singleton,singleton1964} which is met by, for instance, the Reed--Solomon code \cite{reed-solomon}. 

Less is known for multiple packing in Hamming spaces. 
We first discuss the binary case (i.e., $q=2$). 
For every $ L\in\bZ_{\ge2} $, the best lower bound appears to be Blinovsky's bound \cite[Theorem 2, Chapter 2]{blinovsky2012book} proved under the stronger notion of average-radius list-decoding. 
The best upper bound for $L=3$ is due to Ashikhmin, Barg and Litsyn \cite{abl-2000-list-size-2} who combined the MRRW bound \cite{mrrw2} and Litsyn's bound \cite{litsyn-1999} on distance distribution. 
For any $ L\ge4 $, the best upper bound is essentially due to Blinovsky again \cite{blinovsky-1986-ls-lb-binary}, \cite[Theorem 3, Chapter 2]{blinovsky2012book}, though there are some partial improvements. 
In particular, the idea in \cite{abl-2000-list-size-2} was recently generalized to larger $L$ by Polyanskiy \cite{polyanskiy-2016-list-dec} who improved Blinovsky's upper bound for \emph{even} $L$ (i.e., odd $L-1$) and sufficiently large $R$. 
Similar to \cite{abl-2000-list-size-2}, the proof also makes use of a bound on distance distribution due to Kalai and Linial \cite{kalai-linial-1995-distance-distribution} which in turn relies on Delsarte's linear programming bound. 
For larger $q$, Blinovsky's lower and upper bounds\footnotemark{} \cite{blinovsky-2005-ls-lb-qary,blinovsky-2008-ls-lb-qary-supplementary}, \cite[Chapter III, Lecture 9, \S 1 and 2]{ahlswede-blinovsky-2008-book} remain the best known. 
\footnotetext{Some gaps in the proof of the upper bound in \cite{blinovsky-2005-ls-lb-qary,blinovsky-2008-ls-lb-qary-supplementary} are recently observed. 
These gaps are closed in \cite{resch-yuan-zhang-plotkin} and the results therein are extended to the \emph{list-recovery} setting which is a generalization of $q$-ary list-decoding.}

As $L\to\infty$, the limiting value of the largest multiple packing density is a folklore in the literature known as the ``list-decoding capacity'' theorem\footnote{It is an abuse of terminology to use ``list-decoding capacity'' here to refer to the large $L$ limit of the $(L-1)$-list-decoding capacity.}.
Moreover, the limiting value remains the same under the average-radius notion. 

The problem of list-decoding was also studied for settings beyond the Hamming errors, e.g., list-decoding against erasures \cite{guruswami-it2003,ben-aroya-doron-ta-shma-2018-explicit-erasure-ld}, insertions/deletions \cite{guruswami-2020-listdec-insdel}, asymmetric errors \cite{polyanskii-zhang-2021-z}, etc. 
Zhang et al.\ considered list-decoding over general adversarial channels \cite{zhang-2019-list-dec-general}. 
List-decoding against other types of adversaries with \emph{limited} knowledge such as oblivious or myopic adversaries were also considered in the literature \cite{hughes-1997-list-avc,sarwate-gastpar-2012-listdec,zhang-2020-obli-list-dec,hosseinigoki-kosut-2018-oblivious-gaussian-avc-ld,zhang-quadratic-arxiv}. 
The current paper can be viewed as a collection of results for list-decodable codes for adversarial channels over $ \bR $ with $ \ell_2 $ constraints.

\section{Our results}
\label{sec:results}

We derive the best known lower bound on the largest multiple packing density.
Let $ C_{L-1}(N) $ and $ \ol C_{L-1}(N) $ denote the largest density of multiple packings under the standard and the average-radius notions, respectively.

We juxtapose our bound with various existing bounds for the $(N,L-1)$-multiple packing problem. 
In \Cref{thm:lb-ppp}, we prove the following lower bound on the optimal density for $(N,L-1)$-\emph{average-radius} list-decoding (which is stronger than $ (N,L-1) $-list-decoding): 
\begin{align}
\ol C_{L-1}(N) &\ge \frac{1}{2}\ln\frac{L-1}{2\pi eNL} - \frac{\ln L}{2(L-1)}. \label{eqn:compare-lb-ppp}
\end{align}
This bound turns out to be the largest known lower bound on both $ C_{L-1}(N) $ and $ \ol{C}_{L-1}(N) $ for all $N\ge0$ and $ L\in\bZ_{\ge2} $.
In \cite{blinovsky-2005-random-packing}, Blinovsky considered PPPs and arrived at the same bound. 
See \Cref{sec:bli-mistake-ppp} for a discussion. 
Curiously, the above bound can also be obtained under $(N,L-1)$-list-decoding (which is weaker than $(N,L-1)$-average-radius list-decoding) via a connection with error exponents \cite{zhang-split-ee}. 
The techniques for \emph{bounded} packings (in which all points lie\footnotemark{} either in $ \cB^n(\vzero,\sqrt{nP}) $ or on $ \cS^{n-1}(\vzero,\sqrt{nP}) $ for some $P>0$) in \cite{blachman-few-1963-multiple-packing} can be adapted to the unbounded setting (where points can lie anywhere in $ \bR^n $) considered in this paper and be strengthened to work for the stronger notion of \emph{average-radius} multiple packing. 
\footnotetext{Here we use $ \cB^n(\vx,r) $ and $ \cS^{n-1}(\vx,r) $ to denote the $n$-dimensional Euclidean ball and $ (n-1) $-dimensional Euclidean sphere of radius $r$ centered at the $\vx$, respectively. }
They yield the following lower bound on $ \ol C_{L-1}(N) $: 
\begin{align}
\ol C_{L-1}(N) &\ge \frac{1}{2}\ln\frac{L-1}{4\pi eNL}. \label{eqn:compare-lb-bf-unbdd} 
\end{align}
As for upper bound, the techniques in \cite{blachman-few-1963-multiple-packing,blinovsky-1999-list-dec-real,blinovsky-2005-random-packing} can be adapted to the unbounded setting as well which yield the following upper bound on $ C_{L-1}(N) $: 
\begin{align}
C_{L-1}(N) &\le \frac{1}{2}\ln\frac{L-1}{2\pi eNL}. \label{eqn:compare-eb-unbdd}
\end{align}
Finally, it is known (see, e.g., \cite{zhang-vatedka-2019-listdecreal}) that as $ L\to\infty $, $ C_{L-1}(N) $ converges to the following expression: 
\begin{align}
C_{\mathrm{LD}}(N) &= \frac{1}{2}\ln\frac{1}{2\pi eN}. \label{eqn:compare-ld-cap-unbdd}
\end{align}
Note that, 
by the lower and upper bounds (\Cref{eqn:compare-lb-ppp,eqn:compare-eb-unbdd}) on $ \ol C_{L-1}(N) $ for finite $L$, the limiting value of $ \ol C_{L-1}(N) $ as $ L\to\infty $ is also the above expression. 

All the above bounds for $ (N,L-1) $-multiple packing are plotted in \Cref{fig:ld-unbdd} with $ L = 5 $. 
The horizontal axis is $N$ and the vertical axis is the value of various bounds. 
The largest lower bound turns out to be \Cref{eqn:compare-lb-ppp} (for all $N\ge0$ and $ L\in\bZ_{\ge2} $).
This bound together with the Elias--Bassalygo-type upper bound in \Cref{eqn:compare-eb-unbdd} are plotted in \Cref{fig:PPPEB} for $ L=3,4,5 $. They both converge from below to \Cref{eqn:compare-ld-cap-unbdd} as $ L $ increases.

\begin{figure}[htbp]
	\centering
	\includegraphics[width=0.95\textwidth]{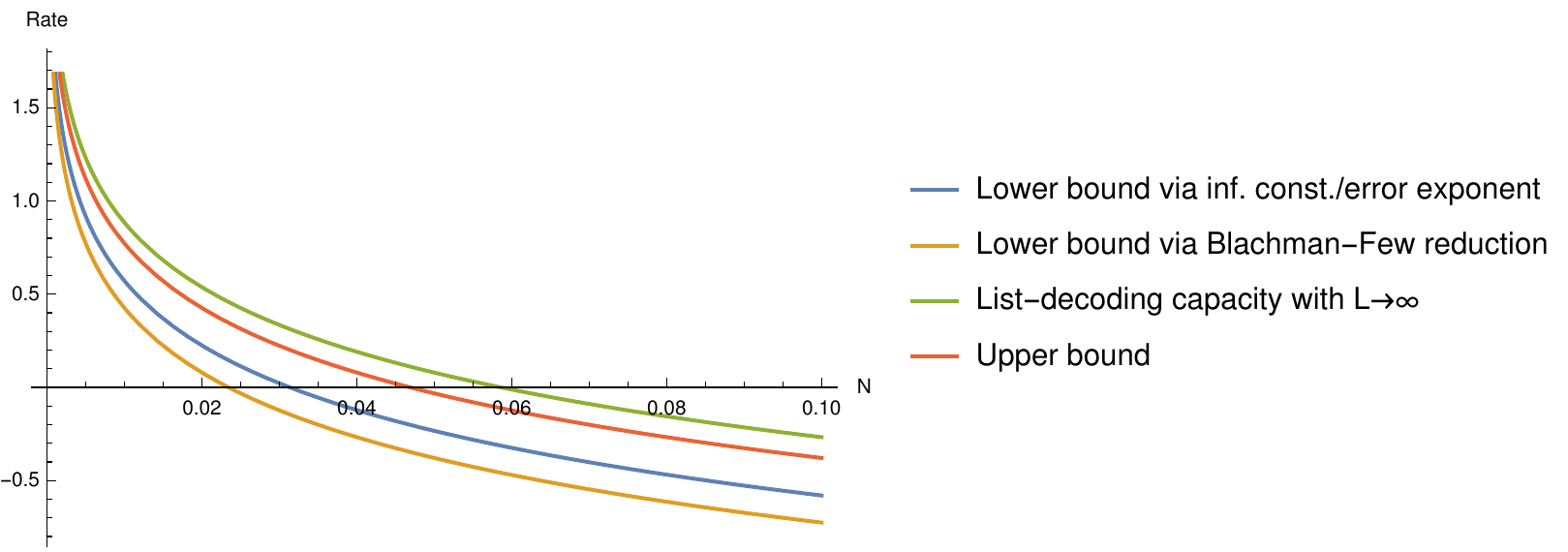}
	\caption{Comparison of different bounds for the $ (N,L-1) $-list-decoding problem. The horizontal axis is $N$ and the vertical axis is the value of bounds. We plot bounds for $ L = 5 $. Recall that the rate (\Cref{eqn:intro-rate-unbdd}) of a multiple packing is defined as the (normalized) number of points per volume which can be negative. The expressions for all bounds can be found in \Cref{eqn:compare-lb-ppp,eqn:compare-lb-bf-unbdd,eqn:compare-eb-unbdd,eqn:compare-ld-cap-unbdd}. }
	\label{fig:ld-unbdd}
\end{figure}

\begin{figure}[htbp]
	\centering
	\includegraphics[width=0.95\textwidth]{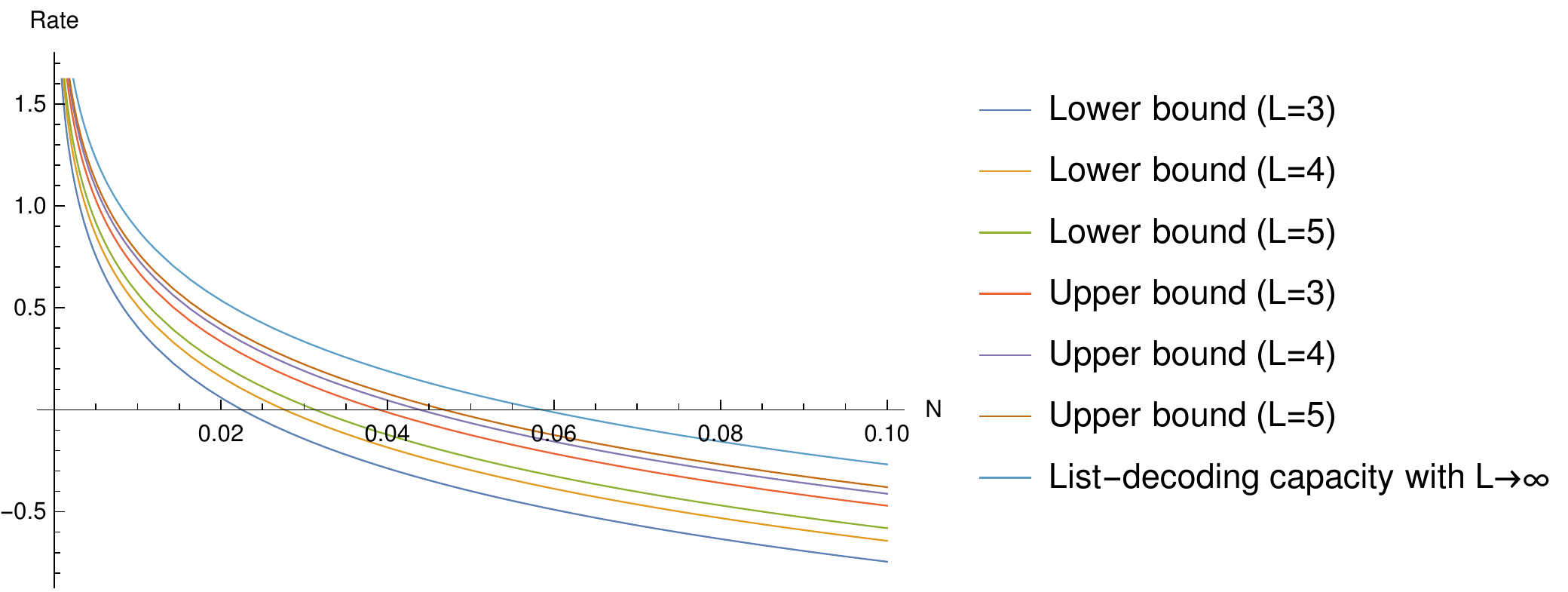}
	\caption{Plots of the best known lower bound (\Cref{eqn:compare-lb-ppp}) on $ C_{L-1}(N) $ and the Elias--Bassalygo-type upper bound for $ L=3,4,5 $. As $ L $ increases, they both converge from below to $ C_{\mathrm{LD}}(N) $ (\Cref{eqn:compare-ld-cap-unbdd}). 
	The lower bound \Cref{eqn:compare-lb-ppp} derived in this paper is under the \emph{average-radius} notion of multiple packing.
	Moreover, it can be obtained using a connection with error exponents \cite{zhang-split-ee} under the standard notion of multiple packing. }
	\label{fig:PPPEB}
\end{figure}

\section{List-decoding capacity for large $L$}
\label{sec:listdec-cap-large}

All bounds in this paper hold for any \emph{fixed} $L$. 
In this section, we discuss the impact of our finite-$L$ bounds on the understanding of the limiting values of the largest multiple packing density as $ L\to\infty $. 
Some of these results were known previously and others follow from the bounds in the current paper.

Characterizing 
$ C_{L-1}(N) $ or $ \ol C_{L-1}(N) $ is a difficult task that is out of reach given the current techniques. 
However, if the list-size $L$ is allowed to grow, we can actually characterize 
\begin{align}
C_{\mathrm{LD}}(N) \coloneqq \lim_{L\to\infty} C_{L-1}(N), \quad
\ol C_{\mathrm{LD}}(N) \coloneqq \lim_{L\to\infty} \ol C_{L-1}(N), \notag 
\end{align}
where the subscript $ \mathrm{LD} $ denotes List-Decoding. 

	

The value of $ C_{\mathrm{LD}}(N) $ is characterized in \cite{zhang-vatedka-2019-listdecreal} which equals $ \frac{1}{2}\ln\frac{1}{2\pi eN} $. 
\begin{theorem}[\cite{zhang-vatedka-2019-listdecreal}]
\label{thm:listdec-cap-large-unbdd}
Let $ N>0 $. 
Then for any $ \eps>0 $, 
\begin{enumerate}
	\item There exist $ (N,L-1) $-multiple packings of rate $ \frac{1}{2}\ln\frac{1}{2\pi eN} - \eps $ for some $ L = \cO\paren{\frac{1}{\eps}\ln\frac{1}{\eps}} $; 
	
	\item Any $ (N,L-1) $-multiple packing of rate $ \frac{1}{2}\ln\frac{1}{2\pi eN} + \eps $ must satisfy $ L = e^{\Omega(n\eps)} $. 
\end{enumerate}
Therefore, $ C_{\mathrm{LD}}(N) = \frac{1}{2}\ln\frac{1}{2\pi eN} $. 
\end{theorem}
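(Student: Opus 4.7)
The plan is to decouple the two parts: achievability (part~(1)) reduces directly to the finite-$L$ bound \Cref{eqn:compare-lb-ppp}, while the converse (part~(2)) is a standalone volume-based pigeonhole argument.

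For part~(1), I invoke the inequality $C_{L-1}(N) \ge \ol C_{L-1}(N) \ge \frac{1}{2}\ln\frac{L-1}{2\pi eNL} - \frac{\ln L}{2(L-1)}$, which is valid because any average-radius $(N,L-1)$-multiple packing is in particular an $(N,L-1)$-multiple packing in the standard sense. The gap between this bound and the claimed capacity $\frac{1}{2}\ln\frac{1}{2\pi eN}$ simplifies to $\frac{1}{2}\ln\frac{L}{L-1} + \frac{\ln L}{2(L-1)} = \frac{1}{2L} + \frac{\ln L}{2(L-1)} + O(L^{-2})$. Forcing this to be at most $\eps$ and solving for $L$ yields $L = \cO(\frac{1}{\eps}\ln\frac{1}{\eps})$, matching the claimed list size.

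For part~(2), I would argue by direct averaging. Let $\cC \subset \bR^n$ be an $(N,L-1)$-multiple packing of rate at least $\frac{1}{2}\ln\frac{1}{2\pi eN} + \eps$. By the limsup definition of rate, the density $\rho_K \coloneqq \card{\cC \cap [-K,K]^n}/(2K)^n$ is at least $e^{n(\frac{1}{2}\ln\frac{1}{2\pi eN} + \eps/2)}$ for arbitrarily large $K$. Drawing $y$ uniformly from $[-K,K]^n$ and applying Fubini,
\[
\mathbb{E}_y\bigl[\card{\cC \cap \cB^n(y,\sqrt{nN})}\bigr] = \rho_K \cdot \mathrm{Vol}(\cB^n(\vzero,\sqrt{nN})) \cdot (1 + o_K(1)),
\]
the $o_K(1)$ capturing boundary effects (codewords within $\sqrt{nN}$ of the boundary of $[-K,K]^n$) and vanishing as $K \to \infty$. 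By Stirling, $\mathrm{Vol}(\cB^n(\vzero,\sqrt{nN})) = (2\pi eN)^{n/2} e^{o(n)}$, so the right-hand side is at least $e^{n\eps/2 - o(n)}$ for large enough $K$. The multiple-packing property caps every ball at $L-1$ codewords, and the maximum is at least the average, hence $L - 1 \ge e^{n\eps/2 - o(n)} = e^{\Omega(n\eps)}$.

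Neither step presents a real obstacle. The achievability is an asymptotic re-reading of the main finite-$L$ theorem of this paper, and the converse is the classical Poltyrev-style volume argument. The only mild care concerns boundary effects in the Fubini step; these are dispatched by sending $K \to \infty$ before $n \to \infty$.
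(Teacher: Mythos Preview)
The paper does not actually prove \Cref{thm:listdec-cap-large-unbdd}; it is stated as a cited result from \cite{zhang-vatedka-2019-listdecreal}, so there is no in-paper proof to compare against directly. Your argument is correct and supplies a self-contained proof using the machinery already present in the paper.

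For part~(1) you bootstrap from the paper's main finite-$L$ achievability bound \Cref{eqn:compare-lb-ppp} (\Cref{thm:lb-ppp}); this is a clean route, and indeed the paper itself takes exactly this path when arguing the closely related \Cref{thm:avgrad-listdec-cap-large-unbdd} for the average-radius variant. The only structural remark is that \Cref{thm:listdec-cap-large-unbdd} appears in \Cref{sec:listdec-cap-large} while \Cref{thm:lb-ppp} is proved later in \Cref{sec:lb-ppp}, so your argument introduces a forward dependence; this is not a logical circularity since \Cref{thm:lb-ppp} does not rely on \Cref{thm:listdec-cap-large-unbdd}. For part~(2), your averaging/pigeonhole computation is precisely the Elias--Bassalygo-type volume bound \Cref{eqn:compare-eb-unbdd}, which the paper also states without proof (deferring to \cite{zhang-split-misc}); your handling of the boundary term by sending $K\to\infty$ for fixed $n$ is the standard and correct way to dispatch it.
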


Moreover, we claim $ \ol C_{\mathrm{LD}}(N) = \frac{1}{2}\ln\frac{1}{2\pi eN} $. 
For an upper bound, recall that average-radius list-decodability implies (regular) list-decodability. 
Therefore, any upper bound on $ C_{L-1}(N) $ is also an upper bound on $ \ol C_{L-1}(N) $. 
We already saw an upper bound on $ C_{L-1}(N) $ in \Cref{eqn:compare-eb-unbdd} that approaches $ \frac{1}{2}\ln\frac{1}{2\pi eN} $ as $L\to\infty$. 
Indeed, according to \Cref{thm:lb-ppp}, for sufficiently large $L$, our construction achieves $ \frac{1}{2}\ln\frac{1}{2\pi eN} $ under average-radius multiple packing. 

\begin{theorem}
\label{thm:avgrad-listdec-cap-large-unbdd}
For any $ N>0 $, $ \ol C_{\mathrm{LD}}(N) = \frac{1}{2}\ln\frac{1}{2\pi eN} $. 
\end{theorem}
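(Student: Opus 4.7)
The plan is to sandwich $\ol C_{\mathrm{LD}}(N) = \lim_{L\to\infty} \ol C_{L-1}(N)$ between matching upper and lower bounds, each obtained by taking $L \to \infty$ in a finite-$L$ bound already in hand. No new combinatorial or probabilistic construction is needed; the asymptotic statement falls out of the finite-$L$ bounds already listed in the introduction.

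For the upper bound, I would first record the fact that every average-radius $(N,L-1)$-multiple packing is in particular an ordinary $(N,L-1)$-multiple packing, since the average-radius condition is strictly stronger (the Chebyshev radius is at most the maximum distance to the centroid). Consequently $\ol C_{L-1}(N) \le C_{L-1}(N)$ for every $L \ge 2$. The Elias--Bassalygo-type bound in \Cref{eqn:compare-eb-unbdd} then gives $\ol C_{L-1}(N) \le \tfrac{1}{2}\ln\tfrac{L-1}{2\pi eNL}$. Letting $L\to\infty$, the factor $(L-1)/L$ tends to $1$, so the right-hand side tends to $\tfrac{1}{2}\ln\tfrac{1}{2\pi eN}$, and since this is an upper bound for every sufficiently large $L$ it is also an upper bound on the limit $\ol C_{\mathrm{LD}}(N)$.

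For the lower bound, I would invoke the main finite-$L$ result of the paper, \Cref{thm:lb-ppp}, which yields $\ol C_{L-1}(N) \ge \tfrac{1}{2}\ln\tfrac{L-1}{2\pi eNL} - \tfrac{\ln L}{2(L-1)}$ for every $L\ge 2$. As $L\to\infty$, the first term again tends to $\tfrac{1}{2}\ln\tfrac{1}{2\pi eN}$, while the correction $\tfrac{\ln L}{2(L-1)}$ tends to $0$. Hence $\liminf_{L\to\infty}\ol C_{L-1}(N) \ge \tfrac{1}{2}\ln\tfrac{1}{2\pi eN}$, which combined with the upper bound above gives the claimed equality.

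There is no real obstacle in the argument: once \Cref{thm:lb-ppp} and the upper bound \Cref{eqn:compare-eb-unbdd} are granted, the theorem reduces to an elementary limit computation. The only point that merits explicit mention is the inequality $\ol C_{L-1}(N) \le C_{L-1}(N)$, needed to transfer the Elias--Bassalygo upper bound (proved for the weaker standard notion) to the average-radius setting. All the substantive work is absorbed into \Cref{thm:lb-ppp}, whose proof via infinite constellations is the technical heart of the paper.
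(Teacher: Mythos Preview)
Your proposal is correct and matches the paper's argument essentially verbatim: the paper obtains the upper bound by observing that average-radius list-decodability implies ordinary list-decodability (hence $\ol C_{L-1}(N)\le C_{L-1}(N)$) and then invoking \Cref{eqn:compare-eb-unbdd}, and obtains the lower bound by letting $L\to\infty$ in \Cref{thm:lb-ppp}. One cosmetic remark: your parenthetical justification (``the Chebyshev radius is at most the maximum distance to the centroid'') is true but not the relevant inequality---what you actually need is that the average squared radius is at most the squared Chebyshev radius, which is the content of \Cref{eqn:cheb-rad-lb}.
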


\section{Our techniques}
\label{sec:techniques}

We summarize our techniques below. 

To obtain lower bounds on the largest multiple packing density, our basic strategy is random coding with expurgation, a standard tool from information theory. 
To show the existence of a list-decodable code of rate $R$, we simply randomly sample $ e^{nR} $ points independently each according to a certain distribution. 
There might be bad lists of size $L$ that violates the multiple packing condition. 
We then throw away (a.k.a.\ expurgate) one point from each of the bad lists. 
By carefully analyzing the error event and choosing a proper rate, we can guarantee that the remaining code has essentially the same rate after the removal process. 
We then get a list-decodable code of rate $R$ by noting that the remaining code contains no bad lists.

In the above framework, the key ingredient is a good estimate on the probability of the error event, i.e., the probability that the \emph{list-decoding radius} of a size-$L$ list is smaller than $ \sqrt{nN} $. 
Under the standard notion of multiple packing, the list-decoding radius is the \emph{Chebyshev radius} of the list, i.e., the radius of the smallest ball containing the list. 
Under the average-radius notion of multiple packing, the (squared) list-decoding radius is the \emph{average squared radius} of the list, i.e., the average squared distance from each point in the list to the centroid of the list. 

Using the above idea, we first construct a \emph{finite} codebook with minimum average squared radius $nN$ and supported over the hypercube $[-K,K]^n$ for a suitably chosen $K$. This is obtained by expurgating a random codebook obtained by choosing points independently and uniformly from $[-K,K]^n$. The finite codebook is then tiled across $\bR^n$ to obtain an infinite constellation with the aforementioned density and minimum average squared radius $nN$. This construction is loosely inspired by the infinite constellations~\cite{poltyrev1994coding} which was originally studied in the context of coding for the additive white Gaussian noise channel. A similar construction was used by~\cite{zhang-vatedka-2019-listdecreal} to derive lower bounds on $C_{\mathrm{LD}}(N)$ for large $L$.

The exact exponents of the probability of the error event are obtained using \cramer's large deviation principle and the Laplace's method. 

As a technical contribution, we discover several new representations of the average radius and the Chebyshev radius. 
They play crucial roles in facilitating the analyses and the applications of some of these representations go beyond the scope of this paper. 
To name a few, the  average squared radius of a list can be written as a quadratic form associated with the list. 
This representation is used to analyze 
Gaussian codes and spherical codes in \cite{zhang-split-misc} and infinite constellations in this paper. 
The  average squared radius can also be written as the difference between the average (squared) norm of points in the list and the (squared) norm of the centroid of the list. 
This representation is used to analyze spherical codes and ball codes in \cite{zhang-split-misc}. 
The average squared radius can be further written as the average pairwise distance of the list. 
This allows us to give a one-line proof of the Blachman--Few reduction and its strengthened version \cite{zhang-split-misc}. 
Yet another way of writing the average squared radius using the average norm and the average pairwise correlation turns out to be useful for the proof of the Plotkin-type bound \cite{zhang-split-misc}.

\section{Organization of the paper}
\label{sec:org}

This paper derives a lower bound on the largest multiple packing density which turns out to be the best known so far. 
The rest of the paper is organized as follows. 
Notational conventions and preliminary definitions/facts are listed in \Cref{sec:notation,sec:prelim}, respectively. 
After that, we present in \Cref{sec:def} the formal definitions of multiple packing and pertaining notions. 
We also discuss different notions of density of codes used in the literature. 
Furthermore, we obtain several novel representations of the Chebyshev radius and the average squared radius which are crucial for estimating their tail probabilities. 
In \Cref{sec:lb-ppp}, we formally introduce our construction and prove the main result. 
We end the paper with several open questions in \Cref{sec:open}.

\section{Notation}
\label{sec:notation}
\noindent\textbf{Conventions.}
Sets are denoted by capital letters in calligraphic typeface, e.g., $\cC,\cB$, etc. 
Random variables are denoted by lower case letters in boldface or capital letters in plain typeface, e.g., $\bfx,S$, etc. Their realizations are denoted by corresponding lower case letters in plain typeface, e.g., $x,s$, etc. Vectors (random or fixed) of length $n$, where $n$ is the blocklength without further specification, are denoted by lower case letters with  underlines, e.g., $\vbfx,\vbfg,\vx,\vg$, etc. 
Vectors of length different from $n$ are denoted by an arrow on top and the length will be specified whenever used, e.g., $ \vec t, \vec\alpha $, etc. 
The $i$-th entry of a vector $\vx\in\cX^n$ is denoted by $\vx(i)$ since we can alternatively think of $\vx$ as a function from $[n]$ to $\cX$. Same for a random vector $\vbfx$. Matrices are denoted by capital letters, e.g., $A,\Sigma$, etc. Similarly, the $(i,j)$-th entry of a matrix $G\in\bF^{n\times m}$ is denoted by $G(i,j)$. We sometimes write $G_{n\times m}$ to explicitly specify its dimension. For square matrices, we write $G_n$ for short. Letter $I$ is reserved for identity matrix.  

\noindent\textbf{Functions.}
We use the standard Bachmann--Landau (Big-Oh) notation for asymptotics of real-valued functions in positive integers. 

For two real-valued functions $f(n),g(n)$ of positive integers, we say that $f(n)$ \emph{asymptotically equals} $g(n)$, denoted $f(n)\asymp g(n)$, if 
\[\lim_{n\to\infty}\frac{f(n)}{g(n)} = 1.\]
For instance, $2^{n+\log n}\asymp2^{n+\log n}+2^n$, $2^{n+\log n}\not\asymp2^n$.
 We write $f(n)\doteq g(n)$ (read $f(n)$ dot equals $g(n)$) if the coefficients of the dominant terms in the exponents of $f(n)$ and $g(n)$ match,
\[\lim_{n\to\infty}\frac{\log f(n)}{\log g(n)} = 1.\]
For instance, $2^{3n}\doteq2^{3n+n^{1/4}}$, $2^{2^n}\not\doteq2^{2^{n+\log n}}$. Note that $f(n)\asymp g(n)$ implies $f(n)\doteq g(n)$, but the converse is not true.

For any $q\in\bR_{>0}$, we write $\log_q(\cdot)$ for the logarithm to the base $q$. In particular, let $\log(\cdot)$ and $\ln(\cdot)$ denote logarithms to the base $2$ and $e$, respectively.

For any $\cA\subseteq\Omega$, the indicator function of $\cA$ is defined as, for any   $x\in\Omega$, 
\[\one_{\cA}(x)\coloneqq\begin{cases}1,&x\in \cA\\0,&x\notin \cA\end{cases}.\]
At times, we will slightly abuse notation by saying that $\one_{\sfA}$ is $1$ when event $\sfA$ happens and 0 otherwise. Note that $\one_{\cA}(\cdot)=\indicator{\cdot\in\cA}$.

\noindent\textbf{Sets.}
For any two nonempty sets $\cA$ and $\cB$ with addition and multiplication by a real scalar, let $\cA+\cB$ 
denote the Minkowski sum 
of them which is defined as
$\cA+\cB\coloneqq\curbrkt{a+b\colon a\in\cA,b\in\cB}$.
If $\cA=\{x\}$ is a singleton set, we write $x+\cB$ and
 for $\{x\}+\cB$.
For any $ r\in\bR $, the $r$-dilation of $\cA$ is defined as $ r\cA \coloneqq \curbrkt{r\va:\va\in\cA} $. 
In particular, $ -\cA\coloneqq (-1)\cA $. 

For $M\in\bZ_{>0}$, we let $[M]$ denote the set of first $M$ positive integers $\{1,2,\cdots,M\}$.

\noindent\textbf{Geometry.}
Let $ \normtwo{\cdot} $ denote the Euclidean/$\ell_2$-norm. Specifically, for any $\vx\in\bR^n$,
\[ \normtwo{\vx} \coloneqq\paren{\sum_{i=1}^n\vx(i)^2}^{1/2}.\]

With slight abuse of notation, we let $|\cdot|$ denote the ``volume'' of a set w.r.t.\ a measure that is obvious from the context. 
If $ \cA $ is a finite set, then $ |\cA| $ denotes the cardinality of $\cA$ w.r.t.\ the counting measure. 
For a set $ \cA\subset\bR^n $, let
\begin{align}
\aff(\cA) &\coloneqq \curbrkt{\sum_{i = 1}^k\lambda_i\va_i:k\in\bZ_{\ge1};\;\forall i\in[k], \va_i\in\cA,\lambda_i\in\bR,\sum_{i = 1}^k\lambda_i = 1} \notag 
\end{align}
denote the \emph{affine hull} of $\cA$, i.e., the smallest affine subspace containing $\cA$. 
If $ \cA $ is a connected compact set in $ \bR^n $ with nonempty interior and $ \aff(\cA) = \bR^n $, then $ |\cA| $ denotes the volume of $\cA$ w.r.t.\ the $n$-dimensional Lebesgue measure. 
If $ \aff(\cA) $ is a $k$-dimensional affine subspace for $ 1\le k<n $, then $ |\cA| $ denotes the $k$-dimensional Lebesgue volume of $\cA$. 

The closed $n$-dimensional Euclidean unit ball is defined as
\[\cB^n \coloneqq \curbrkt{\vy\in\bR^n\colon \normtwo{\vy} \le 1}.\]
The $(n-1)$-dimensional Euclidean unit sphere is defined as
\[\cS^{n-1} \coloneqq \curbrkt{\vy\in\bR^n\colon \normtwo{\vy} = 1}.\]
For any $ \vx\in\bR^n $ and $ r\in\bR_{>0} $, let $ \cB^n(r) \coloneqq r\cB^n, \cS^{n-1}(r) \coloneqq r\cS^{n-1} $ and $ \cB^n(\vx,r) \coloneqq \vx + r\cB^n, \cS^{n-1}(\vx,r) \coloneqq \vx + r\cS^{n-1} $. 

Let $ V_n \coloneqq |\cB^n| $.

\section{Preliminaries}
\label{sec:prelim}

\begin{lemma}[Change of variable]
\label{lem:change-var}
Let $ \cU\subset\bR^n $ be an open set and $ \phi\colon\cU\to\bR^n $ be an injective differentiable function with continuous partial derivatives, the Jacobian of which is nonzero for every $ \vx\in\cU $.
Then for any compactly supported, continuous function $ f \colon \phi(\cU)\to\bR $, the substitution $ \vv = \phi(\vu) $ yields the following formula
\begin{align}
\int_{\phi(\cU)} f(\vv) \diff\vv &= \int_\cU f(\phi(\vu)) \abs{\det(\nabla\phi)(\vu)} \diff \vu, \notag 
\end{align}
where $ \nabla\phi\in\bR^{n\times n} $ denotes the Jacobian matrix of $\phi$. 
\end{lemma}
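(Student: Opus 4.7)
The statement is the classical multivariable change-of-variables formula for the Lebesgue integral, so I would follow the standard real-analysis strategy: reduce everything to the linear case, then glue local linearizations together. The plan is to proceed in four stages.

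First I would handle the linear case. If $\phi(\vu) = A\vu$ for an invertible matrix $A\in\bR^{n\times n}$, then the claim reduces to showing $|A(E)| = |\det A|\cdot |E|$ for Borel sets $E\subset\bR^n$, since then for simple functions $f$ the identity follows by linearity and for general continuous compactly-supported $f$ by monotone/dominated convergence. To prove $|A(E)| = |\det A|\cdot |E|$, I would decompose $A$ as a product of elementary matrices (permutations, diagonal scalings, and shears), or equivalently use the $LU$/$QR$ decomposition, and verify the identity for each factor directly from Fubini's theorem; multiplicativity of the determinant then closes the case.

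Second, I would pass from linear to affine-linear by translation invariance of Lebesgue measure, so that the formula holds for any map of the form $\vu\mapsto\vx_0 + A(\vu-\vu_0)$ with constant Jacobian $A$. Third, for the general $\phi$, I would localize. Let $K\subset\phi(\cU)$ be the compact support of $f$ and set $K' \coloneqq \phi^{-1}(K)\subset\cU$, which is compact by continuity and injectivity (using a standard argument that $\phi$ restricted to a compact set with continuous inverse is a homeomorphism onto its image). On $K'$, the Jacobian $(\nabla\phi)(\vu)$ is uniformly continuous and bounded away from singular, so for every $\eps>0$ I can cover $K'$ by finitely many small open cubes $Q_j$ on each of which $\phi$ is within a relative factor $(1\pm\eps)$ of its affine linearization at the center $\vu_j$ of $Q_j$. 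On each $Q_j$ the linear case plus a squeeze (sandwiching $\phi(Q_j)$ between dilated images of the linearization) gives the desired identity up to a $(1+O(\eps))$ factor.

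Fourth, I would glue these local estimates using a smooth partition of unity $\{\psi_j\}$ subordinate to the cover $\{Q_j\}$, writing $f\circ\phi = \sum_j (f\circ\phi)\psi_j$ and applying the local linear bound to each piece. Sending $\eps\to 0$ and refining the cover yields the identity. The main obstacle here is the error control in this last step: one has to be careful that the error terms coming from the difference between $\phi$ and its linearization, when integrated over the whole support, remain uniformly $o(1)$ as the cover is refined. This is where uniform continuity of $\nabla\phi$ on the compact set $K'$ is essential, and where I would spend most of the technical work; the rest of the argument is essentially bookkeeping.
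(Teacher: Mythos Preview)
Your proof plan is correct and follows the standard textbook route (linear case via elementary matrices, localization via uniform continuity of $\nabla\phi$, and gluing with a partition of unity). One small technical quibble: the compactness of $K' = \phi^{-1}(K)$ does not follow from ``continuity and injectivity'' of $\phi$ alone; you need that $\phi^{-1}$ is continuous on $\phi(\cU)$, which is where the nonvanishing-Jacobian hypothesis enters via the inverse function theorem. Once that is in place, the rest of your outline goes through.

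That said, the paper does not actually prove this lemma. It is listed in the Preliminaries section as a standard result (the classical change-of-variables formula) and is invoked later without argument, in the same way that Laplace's method and Cram\'er's theorem are stated and used. So there is no ``paper's proof'' to compare against: you have supplied a full proof where the authors simply cite the result as known.
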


\begin{theorem}[Laplace's method]
\label{thm:lap}
Suppose $ f\colon\bR^d\to\bR $ is a twice continuously differentiable function on $ \cA\subset\bR^d $, and there exists a unique point $ \vec t^*\in\interior(\cA) $ (where $ \interior(\cdot) $ denotes the interior of a set) such that 
\begin{align}
f(\vec t^*) = \min_{\vec t\in\cA} f(\vec t),\quad 
(\hess f)(\vec t^*) \succ 0, \notag 
\end{align}
where $ \hess f\in\bR^{d\times d} $ denotes the Hessian matrix of $f$. 
Suppose $ g(\vec t) $ is positive. 
Then 
\begin{align}
\int_{\cA} g(\vec t)e^{-Mf(\vec t)} \diff\vec t 
& \stackrel{M\to\infty}{\asymp} e^{-Mf(\vec t^*)}\paren{\frac{2\pi}{M}}^{d/2} \frac{g(\vec t^*)}{\sqrt{\det((\hess f)(\vec t^*))}} . \notag 
\end{align}
\end{theorem}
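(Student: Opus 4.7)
The plan is to use the standard localization argument: factor out the minimum value, restrict to a small neighborhood of $\vec t^*$, approximate $f$ by its second-order Taylor expansion and $g$ by its value at $\vec t^*$, and show the contribution from outside the neighborhood is negligible. I would first rewrite
\begin{align}
\int_{\cA} g(\vec t)e^{-Mf(\vec t)} \diff\vec t = e^{-Mf(\vec t^*)}\int_{\cA} g(\vec t)e^{-M(f(\vec t) - f(\vec t^*))}\diff\vec t, \notag
\end{align}
so it suffices to show that the remaining integral is asymptotic to $(2\pi/M)^{d/2} g(\vec t^*)/\sqrt{\det H}$, where $H \coloneqq (\hess f)(\vec t^*) \succ 0$.

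Next, for a radius $\delta>0$ small enough that $\cB^d(\vec t^*,\delta)\subset \interior(\cA)$, I would split the integral into the local part on $\cB^d(\vec t^*,\delta)$ and a tail on $\cA\setminus \cB^d(\vec t^*,\delta)$. Since $\vec t^*$ is the unique minimizer and $f$ is continuous, compactness (after truncating $\cA$ if unbounded, which is fine since $\int_{\cA} g e^{-Mf}$ is assumed finite for some $M_0$ and the factor $e^{-(M-M_0)(f(\vec t) - f(\vec t^*))}$ handles the rest) gives some $\eta(\delta)>0$ with $f(\vec t) - f(\vec t^*) \geq \eta(\delta)$ on the tail, so the tail contribution is $O(e^{-M\eta(\delta)})$, which is exponentially smaller than the target rate $M^{-d/2}$ and hence negligible under $\asymp$.

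For the local part, I would Taylor-expand: since $\nabla f(\vec t^*)=0$ and $f\in C^2$, we have $f(\vec t) - f(\vec t^*) = \tfrac12(\vec t-\vec t^*)^\top H (\vec t-\vec t^*) + R(\vec t)$ where $|R(\vec t)| = o(\|\vec t-\vec t^*\|^2)$ uniformly, and by continuity of $g$ we have $g(\vec t) = g(\vec t^*)(1+o(1))$ on the ball. Substituting $\vec u = \sqrt M(\vec t - \vec t^*)$ (applying \Cref{lem:change-var} with Jacobian determinant $M^{-d/2}$) turns the local integral into
\begin{align}
M^{-d/2}\int_{\cB^d(\vzero,\sqrt M\delta)} g\!\left(\vec t^* + \tfrac{\vec u}{\sqrt M}\right)\exp\!\left(-\tfrac12 \vec u^\top H \vec u - MR\!\left(\tfrac{\vec u}{\sqrt M}\right)\right) \diff \vec u. \notag
\end{align}
As $M\to\infty$ the domain exhausts $\bR^d$, the remainder $MR(\vec u/\sqrt M)\to 0$ pointwise, and one dominates the integrand by an integrable Gaussian (choosing $\delta$ so that the quadratic form lower-bounds $f-f(\vec t^*)$ up to a constant factor on the ball). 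Dominated convergence then yields $g(\vec t^*)\int_{\bR^d} e^{-\tfrac12 \vec u^\top H\vec u}\diff \vec u = g(\vec t^*)(2\pi)^{d/2}/\sqrt{\det H}$, completing the asymptotic.

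The main obstacle is making the uniform control of the Taylor remainder and the dominated-convergence bound simultaneously compatible with the choice of $\delta$; in particular one must pick $\delta$ small enough that $H$ dominates the cubic error on $\cB^d(\vec t^*,\delta)$, yet not so small that the Gaussian integral is truncated before the domain blows up. Choosing $\delta$ independent of $M$ but small enough to ensure $(\hess f)(\vec t)\succeq \tfrac12 H$ throughout the ball suffices, after which the rescaled domain $\sqrt M \cdot \cB^d(\vzero,\delta)$ swallows any compact set and the standard dominated-convergence argument goes through.
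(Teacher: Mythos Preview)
The paper does not prove \Cref{thm:lap}: it is stated in \Cref{sec:prelim} as a classical preliminary result (alongside the change-of-variable formula and \cramer's theorem) and is simply invoked later in \Cref{sec:laplace}. So there is no ``paper's own proof'' to compare against; your sketch is the standard localization/Taylor/dominated-convergence argument for Laplace's method and is essentially correct.

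Two small points worth tightening if you were to write this out in full. First, the theorem as stated only says $g$ is positive, not continuous; your argument uses $g(\vec t)=g(\vec t^*)(1+o(1))$ near $\vec t^*$, which needs continuity of $g$ at $\vec t^*$ (this is of course the intended hypothesis, and in the paper's application $g$ is piecewise linear and continuous). Second, your tail bound assumes either compactness of $\cA$ or finiteness of $\int_\cA g e^{-M_0 f}$ for some $M_0$; neither is explicitly in the statement, though again both hold in the paper's use case where $\cA$ is a bounded polytope.
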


\begin{theorem}[\cramer]
\label{thm:cramer-ldp}
Let $ \curbrkt{\bfx_i}_{i = 1}^n $ be a sequence of i.i.d.\ real-valued random variables. 
Let $ \bfs_n \coloneq\frac{1}{n}\sum_{i = 1}^n\bfx_i $. 
Then for any closed $ \cF\subset\bR $,
\begin{align}
\limsup_{n\to\infty} \frac{1}{n}\ln\prob{\bfs_n\in \cF} &\le -\inf_{x\in \cF} \sup_{\lambda\in\bR}\curbrkt{ \lambda x - \ln\expt{e^{\lambda \bfx_1}} }; \notag 
\end{align}
and for any open $ \cG\subset\bR $, 
\begin{align}
\liminf_{n\to\infty} \frac{1}{n}\ln\prob{\bfs_n\in \cG} &\ge -\inf_{x\in \cG} \sup_{\lambda\in\bR}\curbrkt{\lambda x - \ln\expt{e^{\lambda \bfx_1}}}. \notag 
\end{align}
Furthermore, when $\cF$ or $\cG$ corresponds to the upper (resp. lower) tail of $ \bfs_n $, the maximizer $ \lambda\ge0 $ (resp. $ \lambda\le0 $).
\end{theorem}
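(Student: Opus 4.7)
The plan is to prove the upper and lower LDP bounds separately by the classical combination of Chernoff's inequality and exponential tilting (the change of measure trick). Introduce the log-moment generating function $\Lambda(\lambda)\coloneqq\ln\expt{e^{\lambda\bfx_1}}$ and its Legendre--Fenchel transform $\Lambda^*(x)\coloneqq\sup_{\lambda\in\bR}\curbrkt{\lambda x-\Lambda(\lambda)}$, which is the rate function appearing in both statements. The basic facts I will rely on are that $\Lambda$ is convex with $\Lambda'(0)=\expt{\bfx_1}$, and that $\Lambda^*$ is convex, nonnegative, lower semicontinuous, and attains its minimum value $0$ at $\expt{\bfx_1}$.

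For the upper bound I would first establish the half-line estimate $\prob{\bfs_n\ge x}\le e^{-n\Lambda^*(x)}$ whenever $x\ge\expt{\bfx_1}$. This is the one-line Chernoff computation: for any $\lambda\ge0$, Markov's inequality and the product form of the moment generating function give
\begin{align}
\prob{\bfs_n\ge x}\;\le\; e^{-n\lambda x}\expt{e^{\lambda\bfx_1}}^n\;=\;e^{-n(\lambda x-\Lambda(\lambda))}, \notag
\end{align}
and optimizing over $\lambda\ge0$ gives the claim, because for $x\ge\Lambda'(0)$ the unconstrained maximizer of $\lambda x-\Lambda(\lambda)$ is already nonnegative by convexity of $\Lambda$. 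A symmetric argument handles the lower tail $x\le\expt{\bfx_1}$. For a general closed $\cF\subset\bR$, let $x_-\coloneqq\sup(\cF\cap(-\infty,\expt{\bfx_1}])$ and $x_+\coloneqq\inf(\cF\cap[\expt{\bfx_1},\infty))$; a union bound over the two half-lines $(-\infty,x_-]$ and $[x_+,\infty)$, combined with the monotonicity of $\Lambda^*$ on either side of its minimum, yields $\limsup_{n\to\infty}\frac{1}{n}\ln\prob{\bfs_n\in\cF}\le-\inf_{x\in\cF}\Lambda^*(x)$. The final sign assertion of the theorem drops out of the same convexity observation.

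For the lower bound, fix $x\in\cG$ and choose $\delta>0$ with $(x-\delta,x+\delta)\subset\cG$. Assume first that the supremum defining $\Lambda^*(x)$ is attained at some interior $\lambda^*$ with $\Lambda'(\lambda^*)=x$. Perform an exponential tilt: let $Q$ be the probability measure on $\bR$ defined by $\frac{\mathrm dQ}{\mathrm dP}(y)=e^{\lambda^* y-\Lambda(\lambda^*)}$, where $P$ is the law of $\bfx_1$. Under the product tilt $Q^{\otimes n}$ the $\bfx_i$ remain i.i.d.\ with mean $\Lambda'(\lambda^*)=x$, and undoing the change of measure gives
\begin{align}
\prob{\bfs_n\in(x-\delta,x+\delta)}\;\ge\; e^{-n(\lambda^* x+|\lambda^*|\delta-\Lambda(\lambda^*))}\cdot Q^{\otimes n}\!\paren{\bfs_n\in(x-\delta,x+\delta)}. \notag
\end{align}
The tilted probability on the right tends to $1$ by the weak law of large numbers under $Q$, so taking $\liminf$ and then $\delta\downarrow0$ produces $\liminf_{n\to\infty}\frac{1}{n}\ln\prob{\bfs_n\in\cG}\ge-\Lambda^*(x)$; supremizing over $x\in\cG$ closes the argument.

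The main obstacle is the degenerate cases: $\bfx_1$ may be so heavy-tailed that $\Lambda\equiv\infty$ outside $\{0\}$ (in which case $\Lambda^*\equiv0$ on the support of $\bfx_1$ and the bounds are vacuous), or the point $x$ may sit on the boundary of the effective domain of $\Lambda^*$ so that no interior maximizer $\lambda^*$ exists. The standard remedy is to truncate $\bfx_1$ to $[-M,M]$, apply the argument above to the truncation, and send $M\to\infty$, together with approximating $x$ from inside the effective domain and invoking lower semicontinuity of $\Lambda^*$. These steps are routine but slightly technical, which is presumably why the authors simply quote the theorem rather than prove it.
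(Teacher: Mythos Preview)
The paper does not prove this theorem; it merely states Cram\'er's theorem as a preliminary fact in \Cref{sec:prelim} and later invokes it in \Cref{sec:ldp}. Your outline is the standard textbook proof (Chernoff bound for the upper bound, exponential tilting plus the weak law for the lower bound, with truncation to handle the heavy-tailed/boundary cases), and it is correct as a sketch. Since there is no proof in the paper to compare against, there is nothing further to add beyond noting that your closing remark is on the mark: the authors quote the result rather than prove it.
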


\section{Basic definitions and facts}
\label{sec:def}

Given the intimate connection between packing and error-correcting codes, we will interchangeably use the terms ``multiple packing'' and ``list-decodable code''. 
The parameter $ L\in\bZ_{\ge2} $ is called the \emph{multiplicity of overlap} or the \emph{list-size}. 
The parameter $N$ 
is called the \emph{noise power constraints}. 
Elements of a packing are called either \emph{points} or \emph{codewords}. 
We will call a size-$L$ subset of a packing an \emph{$L$-list}. 
This paper is only concerned with the fundamental limits of multiple packing for asymptotically large dimension $n\to\infty$. 
When we say ``a'' code $ \cC $, we always mean an infinite sequence of codes $ \curbrkt{\cC_i}_{i\ge1} $ where $ \cC_i\subset\bR^{n_i} $ and $ \curbrkt{n_i}_{i\ge1} $ is an increasing sequence of positive integers. 

In the rest of this section, we list a sequence of formal definitions and some facts associated with these definitions. 


\begin{definition}[Multiple packing]
\label{def:packing-euclidean}
Let $ N>0 $ and $ L\in\bZ_{\ge2} $. 
A subset $ \cC \subseteq \bR^n $ is called a \emph{$ (N,L-1) $-list-decodable code} (a.k.a.\ an \emph{$(N,L-1)$-multiple packing}) if for every $ \vy\in\bR^n $,
\begin{align}
\card{\cC\cap\cB^n(\vy, \sqrt{nN})} \le L-1 .
\label{eqn:packing-ball-repeat}
\end{align}
The \emph{rate} (a.k.a.\ \emph{density}) of $ \cC $ is defined as 
\begin{align}
R(\cC) \coloneqq& \limsup_{K\to\infty} \frac{1}{n}\ln\frac{\card{\cC\cap (K\cB)}}{\card{K\cB}}, 
\label{eqn:density-unbounded}
\end{align}
where $ \cB $ is an arbitrary centrally symmetric connected compact set in $ \bR^n $ with nonempty interior. 
\end{definition}

\begin{remark}
Common choices of $ \cB $ include the unit ball $ \cB^n $, the unit cube $ [-1,1]^n $, the fundamental Voronoi region $ \cV_\Lambda $ of a (full-rank) lattice $ \Lambda\subset\bR^n $, etc. 
Some choices of $\cB$ may be more convenient than the others for analyzing certain ensembles of packings. 
Therefore, we do not fix the choice of $\cB$ in \Cref{def:packing-euclidean}. 
\end{remark}

\begin{remark}
The rate of a packing (as per \Cref{eqn:density-unbounded}) is also called the \emph{normalized logarithmic density} in the literature. 
It measures the the normalized number of points per unit volume. 
\end{remark}

Note that the condition given by \Cref{eqn:packing-ball-repeat} is equivalent to that for any $ (\vx_1,\cdots,\vx_L)\in\binom{\cC}{L} $, 
\begin{align}
\bigcap_{i = 1}^L\cB^n(\vx_i,\sqrt{nN}) = \emptyset. \label{eqn:packing-ball-alternative} 
\end{align}

\begin{definition}[Chebyshev radius and average squared radius of a list]
\label{def:cheb-rad-avg-rad}
Let $ \vx_1,\cdots,\vx_L $ be $L$ points in $ \bR^n $. 
Then the \emph{squared Chebyshev radius} $ \rad^2(\vx_1,\cdots,\vx_L) $ of $ \vx_1,\cdots,\vx_L $ is defined as the (squared) radius of the smallest ball containing $ \vx_1,\cdots,\vx_L $, i.e., 
\begin{align}
\rad^2(\vx_1,\cdots,\vx_L) \coloneqq& \min_{\vy\in\bR^n} \max_{i\in[L]} \normtwo{\vx_i - \vy}^2. \label{eqn:cheb-rad} 
\end{align}
The \emph{average squared radius} $ \ol\rad^2(\vx_1,\cdots,\vx_L) $ of $ \vx_1,\cdots,\vx_L $ is defined as the average squared distance to the centroid, i.e., 
\begin{align}
\ol\rad^2(\vx_1,\cdots,\vx_L) \coloneqq& \frac{1}{L}\sum_{i = 1}^L{\normtwo{\vx_{i} - \vx}^2}, \label{eqn:avg-rad}
\end{align}
where $ \vx \coloneqq \frac{1}{L}\sum_{i = 1}^L\vx_i $ is the centroid of $ \vx_1,\cdots,\vx_L $. We refer to the square root of the average squared radius as the average radius of the list.
\end{definition}


\begin{remark}
One should note that for an $L$-list $\cL$ of points, the smallest ball containing $\cL$ is not necessarily the same as the \emph{circumscribed ball}, i.e., the ball such that all points in $\cL$ live on the boundary of the ball. 
The circumscribed ball of the polytope $ \conv\curbrkt{\cL} $ spanned by the points in $\cL$ may not exist. 
If it does exist, it is not necessarily the smallest one containing $ \cL $. 
However, whenever it exists, the smallest ball containing $\cL$ must be the circumscribed ball of a certain \emph{subset} of $\cL$. 
\end{remark}

\begin{remark}
\label{rk:motivation-avg-rad}
We remark that the motivation behind the definition of average squared radius (\Cref{eqn:avg-rad}) is to replace the maximization in \Cref{eqn:cheb-rad} with average. 
\begin{align}
\min_{\vy\in\bR^n} \exptover{\bfi\sim[L]}{\normtwo{\vx_\bfi - \vy}^2} 
&= \min_{\vy\in\bR^n} \frac{1}{L}\sum_{i = 1}^L \sum_{j = 1}^n \paren{\vx_i(j) - \vy(j)}^2 \notag\\
&= \min_{(y_1,\cdots,y_n)\in\bR^n} \sum_{j = 1}^n \frac{1}{L}\sum_{i = 1}^L\paren{\vx_i(j) - y_j}^2 \label{eqn:before-interchange} \\
&= \sum_{j = 1}^n \min_{y_j\in\bR} \frac{1}{L}\sum_{i = 1}^L\paren{\vx_i(j) - y_j}^2 \label{eqn:interchange} \\
&= \frac{1}{L}\sum_{i = 1}^L\sum_{j = 1}^n\paren{\vx_i(j) - y_j^*}^2 \label{eqn:opt-y} \\
&= \frac{1}{L} \sum_{i = 1}^L \normtwo{\vx_i - \vx}^2. \label{eqn:relax-formula}
\end{align}
\Cref{eqn:interchange} holds since the inner summation $ \frac{1}{L}\sum_{i = 1}^L \paren{\vx_i(j) - \vy(j)}^2 $ in \Cref{eqn:before-interchange} only depends on $ y_j $ among all $ y_1,\cdots,y_n $. 
\Cref{eqn:opt-y} follows since for each $j$, the minimizer of the minimization in \Cref{eqn:interchange} is $ y_j^* \coloneqq \frac{1}{L}\sum_{i = 1}^L\vx_i(j) $. 
In \Cref{eqn:relax-formula}, the minimizer $ \vy^* $ equals $ \vx \coloneqq \frac{1}{L}\sum_{i = 1}^L\vx_i $. 
\end{remark}

\begin{definition}[Chebyshev radius and average squared radius of a code]
\label{def:cheb-rad-avg-rad-code}
Given a code $ \cC\subset\bR^n $ of rate $R$, the squared \emph{$(L-1)$-list-decoding radius} of $\cC$ is defined as
\begin{align}
\rad^2_L(\cC) \coloneqq&  \min_{\cL\in\binom{\cC}{L}} \rad^2(\cL). \label{eqn:rad-code}
\end{align}
The \emph{$(L-1)$-average squared radius} of $\cC$ is defined as 
\begin{align}
\ol\rad^2_L(\cC) \coloneqq& \min_{\cL\in\binom{\cC}{L}} \ol\rad^2(\cL). \label{eqn:avgrad-code}
\end{align}
\end{definition}


\begin{definition}[Average-radius multiple packing]
\label{def:avg-rad-multi-pack-unbounded}
A subset $ \cC\subset\bR^n $ is called an \emph{$ (N,L-1) $-average-radius list-decodable code} (a.k.a.\ an \emph{$ (N,L-1) $-average-radius multiple packing}) if $ \ol\rad^2_L(\cC)>nN $. 
The \emph{rate} (a.k.a.\ \emph{density}) $ R(\cC) $ of $ \cC $ is given by \Cref{eqn:density-unbounded}. 
The \emph{$(N,L-1)$-average-radius list-decoding capacity} (a.k.a.\ \emph{$(N,L-1)$ average-radius multiple packing density}) is defined as
$$ \ol C_{L-1}(N) \coloneqq \limsup_{n\to\infty} \limsup_{\cC \subseteq \bR^n\colon \ol\rad^2_L(\cC)>nN} R(\cC) .$$ 
The \emph{squared $(L-1)$-average-radius list-decoding radius} at rate $R$ (without input constraint) is defined as
\begin{align} 
\ol\rad^2_L(R) \coloneqq& \limsup_{n\to\infty} \limsup_{\cC \subseteq \bR^n \colon R(\cC)\ge R} \ol\rad^2_L(\cC). \notag 
\end{align}
\end{definition}

Note that $ (L-1) $-list-decodability defined by \Cref{eqn:packing-ball-alternative} is equivalent to $ \rad^2_L(\cC) > nN$. 
We also define the \emph{$(N,L-1)$-list-decoding capacity} (a.k.a.\ \emph{$(N,L-1)$-multiple packing density}) $ C_{L-1}(N) $ and the \emph{$(L-1)$-list-decoding radius} $ \rad^2_L(R) $ at rate $R$: 
\begin{align}
C_{L-1}(N) &\coloneqq \limsup_{n\to\infty} \limsup_{\cC \subseteq \bR^n\colon \rad^2_L(\cC)>nN} R(\cC) , \notag \\
\rad^2_L(R) &\coloneqq \limsup_{n\to\infty} \limsup_{\cC \subseteq \bR^n \colon R(\cC)\ge R} \rad^2_L(\cC). \notag 
\end{align}


Since the average radius is at most the Chebyshev radius, average-radius list-decodability is stronger than regular list-decodability. 
Any lower (resp. upper) bound on $ \ol C_{L-1}(N) $ (resp. $ C_{L-1}(N) $) is automatically a lower (resp. upper) bound on $ C_{L-1}(N) $ (resp. $ \ol C_{L-1}(N) $). 
Proving upper/lower bounds on $ C_{L-1}(N) $ (resp. $ \ol C_{L-1}(N) $) is equivalent to proving upper/lower bounds on $ \rad^2_L(R) $ (resp. $ \ol\rad^2_L(R) $).

\subsection{Different notions of density of packings}
\label{sec:comments-density}

We measure the density of 
a packing using \Cref{eqn:density-unbounded}. 
In the literature, there exists another commonly used notion of density for multiple packings. 
It counts the fraction of space occupied by the union of the balls of radius $ \sqrt{nN} $ centered around points in the packing. 
Specifically, for an $ (N,L-1) $-packing $ \cC\subset\bR^n $,  
\begin{align}
\Delta(\cC) \coloneqq& \limsup_{P\to\infty} \frac{1}{n} \ln \paren{ \card{\bigcup_{\vx\in\cC}\cB^n(\vx, \sqrt{nN})\cap\cB^n(\sqrt{nP})}\big/\card{\cB^n( \sqrt{nP})} }. \label{eqn:density-unbdd-frac-space}
\end{align}

We prove the following statement. 
\begin{theorem}
\label{thm:two-notions-unbdd-density}
Let $ N>0 $ and $ L\in\bZ_{\ge2} $. 
Let $ \cC\subset\bR^n $ be an $(N,L-1)$-multiple packing. 
Then $ \Delta(\cC) \stackrel{n\to\infty}{\asymp} R(\cC) + \frac{1}{2}\ln(2\pi eN) $. 
\end{theorem}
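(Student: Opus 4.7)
The plan is to sandwich the union volume
$W(P)\coloneqq\bigl|\bigcup_{\vx\in\cC}\cB^n(\vx,\sqrt{nN})\cap\cB^n(\sqrt{nP})\bigr|$
between two comparable quantities which, after dividing by $|\cB^n(\sqrt{nP})|$, applying $\tfrac1n\ln(\cdot)$, and taking $\limsup_{P\to\infty}$, both collapse to $R(\cC)+\tfrac1n\ln\bigl(V_n(nN)^{n/2}\bigr)$. Stirling's formula $V_n\sim(2\pi e/n)^{n/2}/\sqrt{\pi n}$ then identifies the additive constant, $\tfrac1n\ln(V_n(nN)^{n/2})\to\tfrac12\ln(2\pi eN)$ as $n\to\infty$, which yields the statement.

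For the upper bound I would apply a union bound: since $\cB^n(\vx,\sqrt{nN})$ can meet $\cB^n(\sqrt{nP})$ only when $\vx\in\cB^n(\sqrt{nP}+\sqrt{nN})$,
\[W(P)\le\bigl|\cC\cap\cB^n(\sqrt{nP}+\sqrt{nN})\bigr|\cdot V_n(nN)^{n/2}.\]
Rewriting $|\cB^n(\sqrt{nP})|=(1+\sqrt{N/P})^{-n}\,|\cB^n(\sqrt{nP}+\sqrt{nN})|$, dividing, and taking $\tfrac1n\ln$ splits the right-hand side into three terms: a rate ratio whose $\limsup_{P\to\infty}$ is exactly $R(\cC)$ (by \Cref{eqn:density-unbounded} with $\cB=\cB^n$), a boundary correction $\ln(1+\sqrt{N/P})\to 0$, and the constant $\tfrac1n\ln(V_n(nN)^{n/2})$. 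Together these give $\Delta(\cC)\le R(\cC)+\tfrac1n\ln(V_n(nN)^{n/2})$.

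For the lower bound I would exploit the packing property through Fubini. Since every $\vy\in\bR^n$ lies in at most $L-1$ of the balls $\{\cB^n(\vx,\sqrt{nN})\}_{\vx\in\cC}$,
\[\sum_{\vx\in\cC}\bigl|\cB^n(\vx,\sqrt{nN})\cap\cB^n(\sqrt{nP})\bigr|=\int_{\cB^n(\sqrt{nP})}\sum_{\vx\in\cC}\mathbf{1}\{\vy\in\cB^n(\vx,\sqrt{nN})\}\,d\vy\le(L-1)\,W(P).\]
Restricting the left-hand sum to $\vx\in\cC\cap\cB^n(\sqrt{nP}-\sqrt{nN})$---whose balls lie fully inside $\cB^n(\sqrt{nP})$ and therefore each contribute the full volume $V_n(nN)^{n/2}$---and proceeding as in the upper bound but with the boundary factor $(1-\sqrt{N/P})^n$ and an extra $-\tfrac{\ln(L-1)}{n}$ term, I obtain $\Delta(\cC)\ge R(\cC)-\tfrac{\ln(L-1)}{n}+\tfrac1n\ln(V_n(nN)^{n/2})$.

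Combining the two bounds, $\Delta(\cC)-R(\cC)-\tfrac1n\ln(V_n(nN)^{n/2})\in\bigl[-\tfrac{\ln(L-1)}{n},0\bigr]$, which vanishes as $n\to\infty$ for fixed $L$; Stirling then finishes the identification of the additive constant. There is no deep obstacle here, only two bookkeeping points worth flagging: one must take $\tfrac1n\ln$ \emph{before} $P\to\infty$ so that the otherwise-dangerous factors $(1\pm\sqrt{N/P})^n$ degrade to $\ln(1\pm\sqrt{N/P})\to 0$; and a strict reading of $\asymp$ as $\lim f/g=1$ presupposes $R(\cC)+\tfrac12\ln(2\pi eN)$ is bounded away from $0$, which is the regime of any packing of genuinely positive fraction-of-space density.
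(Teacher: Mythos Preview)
Your proposal is correct and follows essentially the same approach as the paper: a union bound for the upper estimate and the $(L-1)$-covering multiplicity for the lower estimate, together with Stirling for the ball volume. Your Fubini formulation of the lower bound is a cleaner restatement of what the paper phrases as a ``rearrangement'' argument, and your explicit $\sqrt{nP}\pm\sqrt{nN}$ boundary bookkeeping is more careful than the paper's informal $\lesssim$ and $\approx$, but the underlying ideas are identical.
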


\begin{proof}
Note that for sufficiently large $P$, 
\begin{align}
\card{\bigcup_{\vx\in\cC}\cB^n(\vx, \sqrt{nN})\cap\cB^n(\sqrt{nP})}
&\lesssim \card{\cC\cap\cB^n(\sqrt{nP})}\cdot\card{\cB^n(\sqrt{nN})}. \notag 
\end{align}
Therefore, 
\begin{align}
\Delta(\cC) &\le \limsup_{P\to\infty} \frac{1}{n} \ln\frac{\card{\cC\cap\cB^n(\sqrt{nP})}\cdot\card{\cB^n(\sqrt{nN})}}{\card{\cB^n( \sqrt{nP})}} \notag \\
&= R(\cC) + \frac{1}{n}\ln\card{\cB^n( \sqrt{nN})} \notag \\
&\asymp R(\cC) + \frac{1}{n}\ln\paren{\sqrt{nN}^n\cdot \frac{1}{\sqrt{\pi n}}\sqrt{\frac{2\pi e}{n}}^n } \notag \\
&\asymp R(\cC) + \frac{1}{2}\ln(2\pi eN). \label{eqn:density-ub} 
\end{align}

On the other hand, we claim 
that for sufficiently large $P$, 
\begin{align}
\card{\bigcup_{\vx\in\cC}\cB^n(\vx, \sqrt{nN})\cap\cB^n(\sqrt{nP})} &\gtrsim \frac{1}{L-1} \card{\cC\cap\cB^n(\sqrt{nP})}\cdot\card{\cB^n(\sqrt{nN})}. \label{eqn:vol-relation} 
\end{align}
The above claim is justified at the end of this subsection. 
Then following the same lines of calculations above, we have
\begin{align}
\Delta(\cC) &\ge R(\cC) + \frac{1}{2}\ln(2\pi e N) - o(1). \label{eqn:density-lb} 
\end{align}
Combining \Cref{eqn:density-ub,eqn:density-lb}, we have that for any $ (N,L-1) $-packing $\cC$, 
\begin{align}
\Delta(\cC) &\asymp R(\cC) + \frac{1}{2}\ln(2\pi eN). \notag 
\end{align}

To see \Cref{eqn:vol-relation}, consider the set of convex sets 
\begin{align}
\sD&\coloneqq\curbrkt{\cD(\vx):\vx\in\cC,\;\cB^n(\vx,\sqrt{nN})\cap\cB^n(\sqrt{nP})\ne\emptyset }, \notag
\end{align}
where 
\begin{align}
\cD(\vx)&\coloneqq \cB^n(\vx,\sqrt{nN})\cap\cB^n(\sqrt{nP}). \notag 
\end{align}
Note that some elements in $\sD$ are balls $ \cB^n(\vx,\sqrt{nN}) $, while other elements are the intersection of two balls $ \cB^n(\vx,\sqrt{nN})\cap\cB^n(\sqrt{nP}) $ for some $ \vx\in\cC $. 

We now rearrange all $ \cD(\vx) $ in $ \sD $ so that they become disjoint. 
Then the volume induced by the resulting packing is 
\begin{align}
\sum_{\cD\in\sD}\card{\cD} &= \sum_{\substack{\vx\in\cC\\\cB^n(\vx,\sqrt{nN})\cap\cB^n(\sqrt{nP})\ne\emptyset}} |\cD(\vx)| \notag \\
&\approx \sum_{\vx\in\cC\cap\cB^n(\sqrt{nP})} \card{\cD(\vx)} \notag \\
&= \sum_{\vx\in\cC\cap\cB^n(\sqrt{nP})} \card{\cB^n(\vx,\sqrt{nN})\cap\cB^n(\sqrt{nP})}. \label{eqn:vol-relation-1}
\end{align}
On the other hand, for any $ \vy\in\bigcup\limits_{\cD\in\sD}\cD $, 
it can split into at most $L-1$ points after the above process since by $(L-1)$-list-decodability, each point in a ball is covered by at most $L-2$ other balls. 
Therefore, the volume induced by the rearranged packing is at most 
\begin{align}
(L-1)\card{\bigcup_{\cD\in\sD} \cD} &= (L-1)\card{\bigcup_{\vx\in\cC}\cB^n(\vx,\sqrt{nN})\cap\cB^n(\sqrt{nP})}. \label{eqn:vol-relation-2} 
\end{align}
Combining \Cref{eqn:vol-relation-1,eqn:vol-relation-2}, we have, for sufficiently large $P$,
\begin{align}
\card{\bigcup_{\vx\in\cC}\cB^n(\vx,\sqrt{nN})\cap\cB^n(\sqrt{nP})} &\gtrsim \frac{1}{L-1} \sum_{\vx\in\cC\cap\cB^n(\sqrt{nP})} \card{\cB^n(\vx,\sqrt{nN})\cap\cB^n(\sqrt{nP})} \notag \\
&\approx \frac{1}{L-1}\card{\cC\cap\cB^n(\sqrt{nP})}\card{\cB^n(\sqrt{nN})}, \notag
\end{align}
as claimed in \Cref{eqn:vol-relation}. 
\end{proof}

\subsection{Chebyshev radius and average radius}
\label{sec:comments-radius}

In this section, we present several different representations of the Chebyshev radius and average squared radius. 
Some of them will be crucially used in the subsequent sections of this paper. 
These representations are summarized in the following theorem which will be proved in the subsequent subsections. 
\begin{theorem}
\label{thm:repr-rad}
Let $ L\in\bZ_{\ge2} $ and $ \vx_1,\cdots,\vx_L\in\bR^n $. 
Then the squared Chebyshev radius of $ \vx_1,\cdots,\vx_L $ admits the following alternative representations: 
\begin{enumerate}
\item 
$\begin{aligned}
\rad^2(\vx_1,\cdots,\vx_L) &= \max_{{\vec z}\in\Delta_L} \sum_{i = 1}^L{\vec z}(i)\normtwo{\vx_i - \vy_{\vec z}}^2, \notag 
\end{aligned}$
where $ \vy_{\vec z} \coloneqq \sum_{i = 1}^L{\vec z}(i)\vx_i $ and $ \Delta_L $ denotes the probability simplex on $ [L] $;

\item 
$\begin{aligned}
\rad^2(\vx_1,\cdots,\vx_L) &= \lim_{p\to\infty} \rad^{(p)}(\vx_1,\cdots,\vx_L), \notag 
\end{aligned}$
where 
\begin{align}
\rad^{(p)}(\vx_1,\cdots,\vx_L) &\coloneqq \paren{\min_{\vy\in\bR^n}\frac{1}{L}\sum_{i = 1}^L\normtwo{\vx_i - \vy}^{2p}}^{1/p}; \notag 
\end{align} 

\item 
there exists a unique $ 1\le q\le\infty $ depending on $ \vx_1,\cdots,\vx_L $ such that 
\begin{align}
\rad^2(\vx_1,\cdots,\vx_L) &= \paren{\frac{1}{L}\sum_{i = 1}^L\normtwo{\vx_i - \vx}^{2q}}^{1/q}, \notag 
\end{align}
and $ \vx\coloneqq\frac{1}{L}\sum_{i = 1}^L\vx_i $. 
\end{enumerate}

The average squared radius of $ \vx_1,\cdots,\vx_L $ admits the following alternative representations:
\begin{enumerate}
\item 
$\begin{aligned}
\ol\rad^2(\vx_1,\cdots,\vx_L) &= \frac{1}{L}\sum_{i = 1}^L\normtwo{\vx_i}^2 - \normtwo{\vx}^2; \notag 
\end{aligned}$

\item 
$\begin{aligned}
\ol\rad^2(\vx_1,\cdots,\vx_L) 
&= \frac{L-1}{L^2}\sum_{i = 1}^L\normtwo{\vx_i}^2 - \frac{1}{L^2}\sum_{(i,j)\in[L]^2:i\ne j} \inprod{\vx_i}{\vx_j}; \notag 
\end{aligned}$

\item 
$\begin{aligned}
\ol\rad^2(\vx_1,\cdots,\vx_L)
&= \frac{1}{2L^2}\sum_{(i,j)\in[L]^2:i\ne j} \normtwo{\vx_i - \vx_j}^2. \notag 
\end{aligned}$
\end{enumerate}
\end{theorem}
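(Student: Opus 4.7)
}

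The plan is to handle the three Chebyshev-radius representations and the three average-radius representations separately, since the latter are purely algebraic whereas the former require a minimax argument and a monotonicity/continuity argument.

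For the Chebyshev radius, Part~1 will be obtained by a standard minimax swap. Writing
\begin{align*}
\rad^2(\vx_1,\cdots,\vx_L) = \min_{\vy\in\bR^n}\max_{i\in[L]}\normtwo{\vx_i-\vy}^2 = \min_{\vy\in\bR^n}\max_{{\vec z}\in\Delta_L}\sum_{i=1}^L{\vec z}(i)\normtwo{\vx_i-\vy}^2,
\end{align*}
I will note that the objective is convex (indeed quadratic) in $\vy$, linear in ${\vec z}$, and $\Delta_L$ is compact convex. Sion's minimax theorem then allows me to interchange $\min$ and $\max$. For each fixed ${\vec z}$ the inner objective $\vy\mapsto\sum_i{\vec z}(i)\normtwo{\vx_i-\vy}^2$ is a strictly convex quadratic with minimizer $\vy_{\vec z}=\sum_i{\vec z}(i)\vx_i$, obtained by setting the gradient to zero, which yields Part~1. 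Part~2 follows from Part~1 by the standard observation that $\paren{\frac{1}{L}\sum_i a_i^{p}}^{1/p}\to\max_i a_i$ as $p\to\infty$ for nonnegative $a_i$; I will apply this to $a_i=\normtwo{\vx_i-\vy}^{2}$ and argue that the inner minimization in the definition of $\rad^{(p)}$ commutes with the limit $p\to\infty$ via a uniform-continuity / compactness argument (the minimizers $\vy_p$ lie in a compact set, and the integrand is continuous).

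For Part~3, the idea is a monotonicity / intermediate-value argument applied to
\begin{align*}
\varphi(q) \coloneqq \paren{\frac{1}{L}\sum_{i=1}^L\normtwo{\vx_i-\vx}^{2q}}^{1/q},\qquad q\in[1,\infty].
\end{align*}
By the power-mean inequality, $\varphi$ is non-decreasing and continuous, with $\varphi(1)=\ol\rad^2(\vx_1,\cdots,\vx_L)$ and $\varphi(\infty)=\max_i\normtwo{\vx_i-\vx}^2$. I will first show the sandwich
\begin{align*}
\ol\rad^2(\vx_1,\cdots,\vx_L)\;\le\;\rad^2(\vx_1,\cdots,\vx_L)\;\le\;\max_{i\in[L]}\normtwo{\vx_i-\vx}^2,
\end{align*}
where the left inequality uses $\max_i\normtwo{\vx_i-\vy}^2\ge\frac{1}{L}\sum_i\normtwo{\vx_i-\vy}^2$ together with the fact that the centroid $\vx$ minimizes $\vy\mapsto\sum_i\normtwo{\vx_i-\vy}^2$, and the right inequality is obtained by feasibly plugging in $\vy=\vx$ in \Cref{eqn:cheb-rad}. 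Then the intermediate-value theorem gives existence of $q$; strict monotonicity of $\varphi$ when the $\normtwo{\vx_i-\vx}$ are not all equal yields uniqueness (in the degenerate case where they are all equal, $\varphi$ is constant and equals $\rad^2$, so every $q$ works and the statement is interpreted accordingly).

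For the average squared radius, all three representations follow by expanding \Cref{eqn:avg-rad}. Writing $\vx=\frac{1}{L}\sum_j\vx_j$,
\begin{align*}
\ol\rad^2(\vx_1,\cdots,\vx_L) = \frac{1}{L}\sum_{i=1}^L\paren{\normtwo{\vx_i}^2 - 2\inprod{\vx_i}{\vx} + \normtwo{\vx}^2} = \frac{1}{L}\sum_{i=1}^L\normtwo{\vx_i}^2 - \normtwo{\vx}^2,
\end{align*}
giving Part~1 for the average radius. Substituting $\normtwo{\vx}^2=\frac{1}{L^2}\sum_{i,j}\inprod{\vx_i}{\vx_j}$ and splitting the double sum into the diagonal $i=j$ and off-diagonal $i\ne j$ yields Part~2. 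Finally, using the identity $\normtwo{\vx_i-\vx_j}^2=\normtwo{\vx_i}^2+\normtwo{\vx_j}^2-2\inprod{\vx_i}{\vx_j}$ and summing over $i\ne j$ lets me eliminate the inner-product terms in favor of pairwise squared distances, giving Part~3. The only real obstacle across the whole proof is the careful justification of the minimax swap and the limit exchange in Parts~1 and~2 of the Chebyshev-radius claims; the rest is bookkeeping.
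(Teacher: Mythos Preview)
Your proposal is correct and follows essentially the same route as the paper: the minimax swap for Chebyshev Part~1, power-mean monotonicity and the sandwich $\ol\rad^2\le\rad^2\le\max_i\normtwo{\vx_i-\vx}^2$ for Parts~2 and~3, and direct algebraic expansion for the three average-radius identities. One harmless slip: Chebyshev Part~2 does not actually ``follow from Part~1'' --- your power-mean plus compactness/limit-exchange argument is self-contained and in fact more careful than the paper's one-line appeal to monotonicity of $\expt{(\cdot)^p}^{1/p}$.
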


\subsubsection{Another representation of the Chebyshev radius}
\label{sec:another-cheb-rad}
The Chebyshev radius involves a minimax expression which is in general tricky to handle. 
One can use minimax theorem to interchange the min and max and then compute the inner min explicitly. 
\begin{align}
\rad^2(\vx_1,\cdots,\vx_L) &= \min_{\vy\in\bR^n} \max_{i\in[L]} \normtwo{\vx_i - \vy}^2 
= \min_{\vy\in\bR^n} \max_{{\vec z}\in\Delta_L} \sum_{i = 1}^L {\vec z}(i) \normtwo{\vx_i - \vy}^2. \notag
\end{align}
The last equality follows since the maximum is always achieved by a singleton ${\vec z}\in\zo^L $. 
Note that the objective function on the RHS is linear (hence concave) in $ {\vec z} $ and quadratic (hence convex) in $\vy$. 
Therefore the max and min can be interchanged and we get
\begin{align}
\max_{{\vec z}\in\Delta_L} \min_{\vy\in\bR^n} \sum_{i = 1}^L{\vec z}(i) \normtwo{\vx_i - \vy}^2 
= \max_{{\vec z}\in\Delta_L} \min_{\vy\in\bR^n} \sum_{j = 1}^n\sum_{i = 1}^L{\vec z}(i)\paren{\vx_i(j) - \vy(j)}^2 
= \max_{{\vec z}\in\Delta_L} \sum_{j = 1}^n \min_{y_j\in\bR} \sum_{i = 1}^L{\vec z}(i)\paren{\vx_i(j) - y_j}^2 . \notag 
\end{align}
The last equality follows since each inner summation $ \sum_{i = 1}^L{\vec z}(i)\paren{\vx_i(j) - \vy(j)}^2 $ only depends on $ \vy(j) $ among all $ \vy(1),\cdots,\vy(n) $. 
For each $j$, the minimizing $ y_j^* $ equals
\begin{align}
y_j^* &\coloneqq \frac{\sum_{i = 1}^L{\vec z}(i)\vx_i(j)}{\sum_{i=1}^L{\vec z}(i)} = \sum_{i = 1}^L{\vec z}(i)\vx_i(j), \notag 
\end{align}
where the last equality is because $ {\vec z}\in\Delta_L $. 
Therefore 
\begin{align}
\rad^2(\vx_1,\cdots,\vx_L) &= \max_{{\vec z}\in\Delta_L} \sum_{i = 1}^L{\vec z}(i)\normtwo{\vx_i - \vy_{\vec z}}^2, \label{eqn:cheb-rad-interchange}
\end{align}
where $ \vy_{\vec z} \coloneqq \sum_{i = 1}^L{\vec z}(i)\vx_i $.

\subsubsection{Higher-order approximations to the Chebyshev radius}
\label{sec:higher-order-apx-cheb-rad}
As explained in \Cref{rk:motivation-avg-rad}, the average squared radius is a linear relaxation of the squared Chebyshev radius:
\begin{align}
\ol\rad^2(\vx_1,\cdots,\vx_L) &= \min_{\vy\in\bR^n}\exptover{\bfi\sim[L]}{\normtwo{\vx_\bfi - \vy}^2}. \label{eqn:recall-avg-rad-motivation} 
\end{align}
One may obtain better and better approximations to the squared Chebyshev radius by taking higher and higher order relaxations:
\begin{align}
\rad^{(p)}(\vx_1,\cdots,\vx_L) &= \min_{\vy\in\bR^n}\paren{\exptover{\bfi\sim[L]}{\normtwo{\vx_\bfi - \vy}^{2p}}}^{1/p} = \paren{\min_{\vy\in\bR^n}\exptover{\bfi\sim[L]}{\normtwo{\vx_\bfi - \vy}^{2p}}}^{1/p}, \label{eqn:rad-p}
\end{align}
where $ p\ge1 $. 
The second equality in \Cref{eqn:rad-p} follows since the $ f(\cdot) =  (\cdot)^{1/p} $ is monotonically increasing. 
Note that $ \rad^{(1)}(\vx_1,\cdots,\vx_L) = \ol\rad^2(\vx_1,\cdots,\vx_L) $. 
Moreover, since $ \expt{(\cdot)^p}^{1/p} $ is increasing in $p$, we have 
\begin{align}
\rad^{(p)}(\vx_1,\cdots,\vx_L)\xrightarrow{p\to\infty}\rad^2(\vx_1,\cdots,\vx_L). \notag
\end{align}
However, we do not know how to analyze $ \rad^{(p)} $. 
It seems difficulty to get a closed-form solution of the minimization since the minimizer $ \vy^* $ cannot be obtained by minimizing over $ \vy(j) $ for different $ j\in[n] $ separately.  

\subsubsection{More representations of the average squared radius}
\label{sec:more-repr-avg-rad}
Recall that \Cref{eqn:recall-avg-rad-motivation}, as a lower bound on $ \rad^2(\vx_1,\cdots,\vx_L) $, admits an explicit formula given by \Cref{eqn:relax-formula}:
\begin{align}
\rad^2(\vx_1,\cdots,\vx_L) \ge \ol\rad^2(\vx_1,\cdots,\vx_L) = \frac{1}{L}\sum_{i = 1}^L\normtwo{\vx_i - \vx}^2, \label{eqn:cheb-rad-lb}
\end{align}
where $ \vx\coloneqq\frac{1}{L}\sum_{i = 1}^L\vx_i $ denotes the centroid of $ \vx_1,\cdots,\vx_L $. 
On the other hand, we have
\begin{align}
\rad^2(\vx_1,\cdots,\vx_L) = \min_{\vy\in\bR^n}\max_{i\in[L]} \normtwo{\vx_i - \vy}^2 \le \max_{i\in[L]} \normtwo{\vx_i - \vx}^2 \eqqcolon \rad^2_{\max}(\vx_1,\cdots,\vx_L). \label{eqn:cheb-rad-ub} 
\end{align}
Contrasting \Cref{eqn:cheb-rad-lb,eqn:cheb-rad-ub}, by monotonicity and continuity of $ \norm{p}{\cdot} $ in $p$, we know that there exists $ 1\le p \le\infty $ such that 
\begin{align}
\rad^2(\vx_1,\cdots,\vx_L) = \paren{\frac{1}{L}\sum_{i = 1}^L\normtwo{\vx_i - \vx}^{2p}}^{1/p}.  \notag 
\end{align}
However, we do not know how to use the above observation for the following two reasons. 
Firstly, the above expression seems tricky to handle.
Secondly and more importantly, the number $p$ depends on $ \vx_1,\cdots,\vx_L $ and is typically different for different lists.

Finally, we provide several alternative expressions for $ \ol\rad^2(\vx_1,\cdots,\vx_L) $ which will be useful in the proceeding sections of this paper. 
\begin{align}
\ol\rad^2(\vx_1,\cdots,\vx_L) &= \frac{1}{L}\sum_{i = 1}^L\normtwo{\vx_i - \vx}^2 \notag \\
&= \frac{1}{L}\sum_{i = 1}^L\inprod{\vx_i - \vx}{\vx_i - \vx} \notag \\
&= \frac{1}{L}\sum_{i = 1}^L\paren{ \normtwo{\vx_i}^2 - 2\inprod{\vx_i}{\vx} + \normtwo{\vx}^2 } \notag \\
&= \frac{1}{L}\sum_{i = 1}^L\normtwo{\vx_i}^2 - 2\inprod{\vx}{\vx} + \normtwo{\vx}^2 \notag \\
&= \frac{1}{L}\sum_{i = 1}^L\normtwo{\vx_i}^2 - \normtwo{\vx}^2. \label{eqn:avg-rad-alternative} 
\end{align}

The above expression can be further written as 
\begin{align}
\ol\rad^2(\vx_1,\cdots,\vx_L) &= \frac{1}{L}\sum_{i = 1}^L\normtwo{\vx_i}^2 - \normtwo{\vx}^2 \notag \\
&= \frac{1}{L}\sum_{i = 1}^L\normtwo{\vx_i}^2 - \frac{1}{L^2}\sum_{(i,j)\in[L]^2} \inprod{\vx_i}{\vx_j} \notag \\
&= \frac{1}{L}\sum_{i = 1}^L\normtwo{\vx_i}^2 - \frac{1}{L^2}\sum_{i = 1}^L\normtwo{\vx_i}^2 - \frac{1}{L^2}\sum_{(i,j)\in[L]^2:i\ne j} \inprod{\vx_i}{\vx_j} \notag \\
&= \frac{L-1}{L^2}\sum_{i = 1}^L\normtwo{\vx_i}^2 - \frac{1}{L^2}\sum_{(i,j)\in[L]^2:i\ne j} \inprod{\vx_i}{\vx_j}. \label{eqn:avg-rad-formula-pw-corr} 
\end{align}

At last, \Cref{eqn:avg-rad-formula-pw-corr} can in turn be rewritten as 
\begin{align}
\ol\rad^2(\vx_1,\cdots,\vx_L) 
&= \frac{L-1}{L^2}\sum_{i = 1}^L\normtwo{\vx_i}^2 - \frac{1}{L^2}\sum_{(i,j)\in[L]^2:i\ne j} \inprod{\vx_i}{\vx_j} \notag \\
&= \frac{L-1}{L^2}\sum_{i = 1}^L\normtwo{\vx_i}^2 - \frac{1}{2L^2} \sum_{(i,j)\in[L]^2:i\ne j}\paren{\normtwo{\vx_i}^2 + \normtwo{\vx_j}^2} + \frac{1}{2L^2} \sum_{(i,j)\in[L]^2:i\ne j} \paren{\normtwo{\vx_i}^2 + \normtwo{\vx_j}^2 - 2\inprod{\vx_i}{\vx_j}} \notag \\
&= \frac{L-1}{L^2}\sum_{i = 1}^L\normtwo{\vx_i}^2 - \frac{1}{L^2} \sum_{(i,j)\in[L]^2:i\ne j}\normtwo{\vx_i}^2 + \frac{1}{2L^2}\sum_{(i,j)\in[L]^2:i\ne j} \normtwo{\vx_i - \vx_j}^2 \notag \\
&= \frac{1}{2L^2}\sum_{(i,j)\in[L]^2:i\ne j} \normtwo{\vx_i - \vx_j}^2 . \label{eqn:avg-rad-pw-dist} 
\end{align}

If all $ \vx_1,\cdots,\vx_L $ have the same $ \ell_2 $ norm $ \sqrt{nP} $, then \Cref{eqn:avg-rad-alternative}
\begin{align}
\ol\rad^2(\vx_1,\cdots,\vx_L) = nP - \normtwo{\vx}^2. \label{eqn:avg-rad-spherical-formula}
\end{align}
and \Cref{eqn:avg-rad-formula-pw-corr} becomes
\begin{align}
\ol\rad^2(\vx_1,\cdots,\vx_L) = \frac{L-1}{L}nP - \frac{1}{L^2}\sum_{(i,j)\in[L]^2:i\ne j} \inprod{\vx_i}{\vx_j} . \label{eqn:avg-rad-spherical-formula-pw-corr} 
\end{align}

\section{Lower bounds 
for unbounded packings}
\label{sec:lb-ppp}



In this section, we analyze average-radius list-decodability of a class of regular infinite constellations obtained by expurgating and tiling a random code supported over an $n$-dimensional hypercube. Using this, we prove the following lower bound on the $ (N,L-1) $-average-radius list-decoding capacity of multiple packings. 
\begin{theorem}
\label{thm:lb-ppp}
For any $ N>0 $ and $ L\in\bZ_{\ge2} $, the $ (N,L-1) $-average-radius list-decoding capacity is at least 
\begin{align}
\ol C_{L-1}(N) &\ge \frac{1}{2}\ln\frac{L-1}{2\pi eNL} - \frac{1}{2(L-1)}\ln L. \label{eqn:lb-ppp} 
\end{align}
\end{theorem}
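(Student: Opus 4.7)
The plan is a two-stage construction: first sample a random finite codebook uniformly on a hypercube $[-K,K]^n$ and expurgate any $L$-list with small average squared radius, then tile the expurgated codebook across $\bR^n$ to produce an infinite constellation of the claimed rate. Concretely, I would fix a target rate $R$ and a scale parameter $K=K_n$ growing fast enough that $K\gg\sqrt{nNL}$ (any polynomial growth rate works), and sample $M=\lceil e^{n(R+\ln(2K))}\rceil$ codewords $\vbfx_1,\ldots,\vbfx_M$ i.i.d.\ uniformly on $[-K,K]^n$, producing a finite codebook $\cC_0\subset[-K,K]^n$ of density $M/(2K)^n=e^{nR+o(n)}$.

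The core technical step is a sharp estimate of
$P_{\mathrm{bad}}:=\Pr[\ol\rad^2(\vbfx_1,\ldots,\vbfx_L)\le nN]$
for $L$ i.i.d.\ uniform points on $[-K,K]^n$. I would apply the affine change of variables sending $(\vbfx_1,\ldots,\vbfx_L)$ to the centroid $\vbfx:=\tfrac{1}{L}\sum_i\vbfx_i$ together with the $L-1$ deviations $\vbfx_i-\vbfx$ for $i\ge 2$; a direct one-dimensional computation shows the Jacobian of the inverse map equals $L^n$. Using the defining identity $\ol\rad^2=\tfrac{1}{L}\sum_i\|\vbfx_i-\vbfx\|_2^2$, the bad event becomes the ellipsoidal constraint $\sum_{i=1}^L\|\vbfx_i-\vbfx\|_2^2\le nNL$ on $\bR^{n(L-1)}$, whose underlying quadratic form (after eliminating the redundant deviation) is block-diagonal with $n$ identical copies of the $(L-1)\times(L-1)$ matrix $I+\vec 1\vec 1^{\top}$ of determinant $L$, giving total determinant $L^n$. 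For $K\gg\sqrt{nNL}$ the centroid integration contributes $(2K)^n(1-o(1))$, and the standard ellipsoid volume formula combined with Stirling's approximation then gives
\begin{align*}
P_{\mathrm{bad}}\;\asymp\;\frac{L^{n/2}}{(2K)^{n(L-1)}\sqrt{\pi n(L-1)}}\left(\frac{2\pi eNL}{L-1}\right)^{n(L-1)/2}.
\end{align*}
Alternatively, one may apply \cramer's theorem to the i.i.d.\ per-coordinate quadratic summands of $\ol\rad^2$ and evaluate the resulting Legendre transform via Laplace's method; both routes should produce the same exponent and subexponential prefactor.

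From here I would invoke Markov's inequality: the expected number of bad $L$-lists in $\cC_0$ is at most $M^LP_{\mathrm{bad}}/L!$, and choosing $M$ so that $M^{L-1}P_{\mathrm{bad}}\le L!/2$ bounds the expected count of bad lists by $M/2$. A standard expurgation then extracts a subcode $\cC_0^\star\subseteq\cC_0$ of size $\ge M/2$ containing no bad $L$-list, with density $e^{nR-o(n)}$; taking logarithms rearranges the constraint $M^{L-1}P_{\mathrm{bad}}\le L!/2$ into $R\le\tfrac{1}{2}\ln\tfrac{L-1}{2\pi eNL}-\tfrac{\ln L}{2(L-1)}+o(1)$, matching the claim. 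I would then form the infinite constellation $\cC:=\cC_0^\star+(2K)\bZ^n$; by \Cref{eqn:density-unbounded} its rate equals the density of $\cC_0^\star$ inside the base cube. A bad $L$-list in $\cC$ has diameter at most $2\sqrt{nNL}$, so once $K\gg\sqrt{nNL}$ all its points lie in an $O(1)$-cluster of adjacent tiles; bad lists in $\cC$ therefore correspond to bad lists among a small family of shifted copies of $\cC_0^\star$, and including these shifts among the forbidden configurations during expurgation is a lower-order correction that does not affect the leading-order rate.

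The hard part will be the sharp evaluation of $P_{\mathrm{bad}}$: one must simultaneously track the $L^n$ factor coming from the Jacobian of the centroid/deviation change of variables and the $L^{-n/2}$ factor coming from $\sqrt{\det(I+\vec 1\vec 1^{\top})^{\otimes n}}$, since the $-\tfrac{\ln L}{2(L-1)}$ correction in the final bound is sensitive to both and any slip returns the wrong sign on the correction. A secondary obstacle is making the tiling step rigorous in the presence of cross-tile bad lists; this is controlled by letting $K_n$ grow with $n$ so that the diameter of any bad list is negligible on the tile scale, ensuring the tiling introduces only subexponential correction factors.
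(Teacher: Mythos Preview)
Your two-stage strategy (random uniform code on a cube, expurgate, tile) and the expurgation bookkeeping coincide with the paper's. Your computation of $P_{\mathrm{bad}}$ via the centroid/deviation change of variables and the ellipsoid volume formula is correct and more elementary than the paper's route: the paper instead writes $\ol\rad^2$ as a sum of i.i.d.\ per-coordinate quadratic forms $\vec\bfx_j^\top A\vec\bfx_j$ with $A=I_L-\tfrac{1}{L}J_L$, applies \cramer's theorem, orthogonally diagonalizes $A$, and evaluates the resulting moment generating function as $K\to\infty$ via Laplace's method. Both arrive at the same exponent $E(K)=\tfrac{L-1}{2}\ln\tfrac{L-1}{2\pi eNL}-\tfrac{1}{2}\ln L+(L-1)\ln(2K)$; your approach avoids the large-deviation and Laplace machinery at the price of carefully tracking the $L^n$ Jacobian and the $L^{-n/2}$ ellipsoid determinant, which you do correctly.

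The tiling step, however, has a real gap. Your claim that a bad $L$-list lies in an ``$O(1)$-cluster of adjacent tiles'' is false in this regime: a set of diameter $O(\sqrt{n})$ placed near a corner of $[-K,K]^n$ can meet up to $2^n$ cells of $(2K)\bZ^n$, so the number of cross-tile shift configurations one would have to include in the expurgation is exponential in $n$, and absorbing this into the union bound as you describe would cost a constant in the rate, not $o(1)$. The paper sidesteps the issue entirely by inserting a moat: it takes $K=n^2$ and tiles as $\cC_\infty=\cC_K+(n^2+n^{0.6})\bZ^n$, leaving a strip of width $2n^{0.6}$ between copies. Any $L$-list straddling two tiles then contains two points at distance at least $2n^{0.6}$, and the pairwise-distance representation $\ol\rad^2(\cL)=\tfrac{1}{2L^2}\sum_{i\ne j}\normtwo{\vx_i-\vx_j}^2$ gives $\ol\rad^2(\cL)\ge 4n^{1.2}/L^2\gg nN$, so cross-tile lists are automatically good and no extra expurgation is needed. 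Since $n^{0.6}/n^2\to0$, the moat costs nothing in the asymptotic density. Replacing your gap-free tiling with this construction completes your argument.
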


\begin{remark}
Note that the above bound (\Cref{eqn:lb-ppp}) approaches $ \frac{1}{2}\ln\frac{1}{2\pi eN} $ as $ L\to\infty $. 
The latter quantity is known to be the list-decoding capacity for asymptotically large $L$ (see \Cref{sec:listdec-cap-large}). 
On the extreme, when $L=2$, the above bound becomes $ \frac{1}{2}\ln\frac{1}{8\pi eN} $ which recovers the best known bound due to Minkowski \cite{minkowski-sphere-pack}. 
\end{remark}

To prove the above theorem, let $ R<\frac{1}{2}\ln\frac{L-1}{2\pi eNL} - \frac{1}{2(L-1)}\ln L $ and $ \lambda_n\doteq e^{nR} $. 
The exact choice of $ \lambda_n $ is given by \Cref{eqn:lambda-ppp}. 

To analyze average-radius list-decodability of $\cC$, we first construct an average-radius list-decodable code $\cC_K$ supported within $ \cA = \cI^n $ where $ \cI\coloneqq[-K,K] $ is a sufficiently large interval for some $ K>0 $. We later tile this codebook over $\bR^n$ to obtain an infinite constellation having the same average squared radius as the finite codebook.

The finite codebook $\cC_K$ is obtained by drawing $M\coloneqq \lambda_n|\cA|$ points independently and uniformly at random from $\cA$ and expurgating the resulting codebook.
Let $\cC_K'\coloneqq\{ \vbfx_1, \cdots,\vbfx_M \}$ denote the $M$ independent points uniformly distributed over $ \cA $.

\begin{lemma}\label{lemma:finitecodebook_avgrad}
  There exists a finite codebook $\cC_K$ supported over $\cA$, having minimum average squared radius at least $\sqrt{nN}$ and density
  \[
    \frac{1}{n}\ln \frac{|\cC_K|}{|\cA|} \geq \frac{1}{2}\ln\frac{L-1}{2\pi eNL} - \frac{1}{2(L-1)}\ln L +o(1).
  \]
\end{lemma}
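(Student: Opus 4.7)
The plan is random coding with expurgation using the uniform distribution on the cube $\cA = [-K,K]^n$, with $K = K(n)$ growing suitably (for concreteness, take $K = n$). Sample $M = \lfloor\lambda_n|\cA|\rfloor$ points $\vbfx_1,\ldots,\vbfx_M$ i.i.d.\ uniformly from $\cA$, with $\lambda_n = e^{nR}$ for $R$ slightly below the target. Call an $L$-subset \emph{bad} if its average squared radius is at most $nN$. The expected number of bad $L$-subsets is at most $\binom{M}{L}\,p$, where
\[
p \coloneqq \prob{\ol\rad^2(\vbfx_1,\ldots,\vbfx_L) \le nN}.
\]
If the rate is chosen so that $\binom{M}{L}\,p \le M/4$, Markov's inequality furnishes a realization of $\cC_K'$ having at most $M/4$ bad $L$-subsets; expurgating one codeword per bad subset leaves a code $\cC_K$ of size at least $3M/4$ with $\ol\rad^2_L(\cC_K) > nN$ and density $\frac{1}{n}\ln(|\cC_K|/|\cA|) = R - o(1)$.

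The key technical step is to estimate $p$ via a change of variables to centroid and residuals. Reparameterize $(\vx_1,\ldots,\vx_L) \leftrightarrow (\bar\vx, \vz_1,\ldots,\vz_{L-1})$ with $\bar\vx = \frac{1}{L}\sum_i \vx_i$ and $\vz_i = \vx_i - \bar\vx$ (with $\vz_L = -\sum_{i<L}\vz_i$ determined). A direct determinant computation gives Jacobian $L^n$ for this linear transformation. By \Cref{thm:repr-rad}, $\ol\rad^2$ depends only on the residuals via the positive-definite quadratic form
\[
L\,\ol\rad^2(\vx_1,\ldots,\vx_L) = \sum_{i=1}^{L-1}\normtwo{\vz_i}^2 + \normtwo{\textstyle\sum_{i=1}^{L-1}\vz_i}^2,
\]
which acts coordinatewise via the $(L-1)\times(L-1)$ matrix $A = I + J$ with eigenvalues $L,1,\ldots,1$ and $\det A = L$. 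On the event $\ol\rad^2 \le nN$ every $\vz_i$ satisfies $\normtwo{\vz_i} \le \sqrt{LnN}$, so the constraint ``all $\bar\vx + \vz_i \in \cA$'' is implied by $\bar\vx \in [-K+\sqrt{LnN},\,K-\sqrt{LnN}]^n$; the associated shrinkage factor $(1-\sqrt{LnN}/K)^n$ equals $e^{-\Theta(\sqrt{nLN})} = e^{-o(n)}$ for $K = n$, which is harmless at the rate level.

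A further coordinatewise substitution $\vec w_j = A^{1/2}\vec z_j$, $j = 1,\ldots,n$ (overall Jacobian $L^{n/2}$, via \Cref{lem:change-var}), converts the event $L\,\ol\rad^2 \le LnN$ into a single Euclidean ball constraint $\normtwo{\vec w}^2 \le LnN$ on $\bR^{n(L-1)}$. Consequently
\[
p \;\doteq\; \frac{L^n\cdot(2K)^n}{(2K)^{nL}}\cdot L^{-n/2}\cdot\abs{\cB^{n(L-1)}(\sqrt{LnN})},
\]
and the standard ball-volume asymptotic $V_d \asymp (2\pi e/d)^{d/2}/\sqrt{\pi d}$ yields
\[
p \;\doteq\; L^{n/2}\paren{\frac{2\pi e L N}{(L-1)(2K)^2}}^{n(L-1)/2}.
\]
Plugging into the expurgation requirement $M^{L-1}\,p \lesssim 1$ and combining with the density normalization $\frac{1}{n}\ln|\cA| = \ln(2K)$, the $(2K)^2$ inside $p$ cancels the $\ln(2K)$ contribution, and the target rate $\frac{1}{2}\ln\frac{L-1}{2\pi eLN} - \frac{\ln L}{2(L-1)} + o(1)$ falls out cleanly.

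The main obstacle is not any single deep ingredient but rather the careful bookkeeping of the sub-exponential correction factors: the boundary shrinkage $(1-\sqrt{LnN}/K)^n$, the polynomial prefactors hidden in the $\doteq$ for $V_d$, and the gap between $\binom{M}{L}\,p \le M/4$ and $\doteq 1$. All of these lie in the $e^{o(n)}$ regime provided $K/\sqrt{nN} \to \infty$, which the choice $K = n$ comfortably satisfies, so they do not contaminate the leading exponent.
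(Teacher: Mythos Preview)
Your argument is correct and arrives at exactly the paper's exponent $E(K)$, but by a genuinely different and more elementary route. The paper writes $\ol\rad^2$ as a sum of i.i.d.\ coordinatewise quadratic forms, invokes \cramer's large deviation principle (\Cref{thm:cramer-ldp}), and then evaluates the resulting moment generating function by diagonalizing $A = I_L - \tfrac{1}{L}J_L$ and applying Laplace's method (\Cref{thm:lap}) in the large-$K$ limit. You instead exploit directly that for the uniform distribution on $\cA$ the probability is a volume ratio: the centroid--residual change of variables factors out a $(2K)^n$ from the translation-invariant event, and a second substitution by $(I_{L-1}+J_{L-1})^{1/2}$ reduces the residual integral to the volume of a Euclidean ball in $\bR^{n(L-1)}$, for which Stirling gives the exponent immediately. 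Your $(L-1)\times(L-1)$ matrix $I+J$ is precisely the restriction of the paper's $I_L - \tfrac{1}{L}J_L$ to the hyperplane $\sum_i z_i = 0$, so the two diagonalizations are the same linear algebra viewed before and after quotienting out the centroid.

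What each approach buys: yours is shorter and avoids both the LDP and Laplace machinery entirely, which is natural here because the uniform law makes everything a volume computation. The paper's LDP route is more portable to other codeword distributions (where probabilities are not volume ratios), and it yields matching upper and lower exponents for $p$ automatically. One small remark on your write-up: the boundary-shrinkage estimate $(1-\sqrt{LnN}/K)^n = e^{-o(n)}$ is only needed for the \emph{lower} bound on $p$; for the lemma you only need the \emph{upper} bound, which follows simply from the fact that the $\bar\vx$-region (an intersection of $L$ translated cubes) has volume at most $(2K)^n$ for every fixed residual vector. So the choice $K=n$ and the shrinkage discussion are not actually load-bearing for \Cref{lemma:finitecodebook_avgrad}, though they do confirm that your $\doteq$ is tight.
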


The first step is to bound
\begin{align}
\prob{\ol\rad^2(\vbfx_1,\cdots,\vbfx_L)\le nN} . \label{eqn:avg-rad-bd} 
\end{align}
for every subset of $L$ codewords in $\cC_K'$.
In fact, we will prove the following lemma.
\begin{lemma}\label{lemma:avg_rad_exponent}
  For any $\vbfx_1,\cdots,\vbfx_L$ drawn independently and uniformly at random from $\cA$, we have
  \[
    \prob{\ol\rad^2(\vbfx_1,\cdots,\vbfx_L) \le nN} = e^{nE(K)+o(n)} ,
  \]
  where the $o(n)$ term is independent of $K$, and
  \[
E(K)\coloneqq \frac{L-1}{2}\ln\frac{L-1}{2\pi eNL} - \frac{1}{2}\ln L + (L-1)\ln (2K). 
  \]
\end{lemma}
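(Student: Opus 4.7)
The plan is to compute the probability directly as a volume ratio via a centroid-plus-differences change of coordinates. Using the representation $\ol\rad^2(\vx_1,\cdots,\vx_L) = \frac{1}{L}\sum_i\normtwo{\vx_i - \vx}^2$ with $\vx\coloneqq\frac{1}{L}\sum_i\vx_i$ (\Cref{eqn:avg-rad}), I perform the substitution $(\vx_1,\cdots,\vx_L)\mapsto(\vu,\vv_1,\cdots,\vv_{L-1})$ given by $\vu\coloneqq\vx$ and $\vv_i\coloneqq\vx_i - \vx$ for $i<L$ (with the implicit relation $\vv_L = -\sum_{i<L}\vv_i$). On each of the $n$ coordinate slots this is an $L\times L$ linear map of determinant $L$, so \Cref{lem:change-var} supplies an overall Jacobian factor $L^n$. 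The constraint $\ol\rad^2 \le nN$ rewrites as $\vec v^T(I_n\otimes A)\vec v \le nLN$, where $\vec v \coloneqq (\vv_1,\cdots,\vv_{L-1})\in\bR^{n(L-1)}$ and $A\coloneqq I_{L-1} + \mathbf{1}\mathbf{1}^T$. For $\vu$ in the bulk of $\cA = [-K,K]^n$, the reconstructed $\vx_L \in \cA$ constraint is inactive, so
\begin{align}
\prob{\ol\rad^2\le nN} \asymp \frac{L^n\cdot\card{\cE_n}}{(2K)^{n(L-1)}},\qquad \cE_n\coloneqq\curbrkt{\vec v\in\bR^{n(L-1)}\colon \vec v^T(I_n\otimes A)\vec v\le nLN}. \notag
\end{align}

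The ellipsoid volume is completely explicit. The matrix determinant lemma gives $\det A = 1 + \mathbf{1}^T\mathbf{1} = L$, so $\det(I_n\otimes A) = L^n$ and $\card{\cE_n} = L^{-n/2}\,V_{n(L-1)}\,(nLN)^{n(L-1)/2}$. Substituting the high-dimensional ball volume asymptotic $V_m\asymp(2\pi e/m)^{m/2}/\sqrt{\pi m}$ at $m = n(L-1)$ and taking $n^{-1}\ln(\cdot)$ yields
\begin{align}
\frac{1}{n}\ln\prob{\ol\rad^2\le nN} \asymp \frac{\ln L}{2} - \frac{L-1}{2}\ln\frac{L-1}{2\pi eNL} - (L-1)\ln(2K), \notag
\end{align}
which is exactly $-E(K)$ for the $E(K)$ defined in the lemma. (Thus the exponent in the statement should be read as $e^{-nE(K)+o(n)}$: $E(K)$ as written is the \cramer rate function, which is positive in the relevant regime.) The residual Stirling prefactor $1/\sqrt{\pi n(L-1)}$ contributes only $O(\ln n)$ to the log and involves no $K$, accounting for the claim that the $o(n)$ error is $K$-independent.

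The only non-routine step is a uniform-in-$K$ justification of the bulk approximation for the $\vu$-integral. Precisely, for the upper bound one must show that extending $\vec v$ to all of $\bR^{n(L-1)}$ overcounts by at most a subexponential factor, and for the lower bound, restricting $\vu$ to a sub-cube $[-K+r_n,K-r_n]^n$ and $\vec v$ to the subset that keeps $\vx_L\in\cA$ loses at most a polynomial-in-$n$ factor. Since $\cE_n$ has typical coordinate-wise magnitude $O(\sqrt N)$ (and extremal magnitude $O(\sqrt{nLN})$), choosing $r_n = \sqrt n\log n$ confines $\vec v$ inside the bulk except on a subexponentially small event, while the boundary slab of $\cA$ has relative volume $O(nr_n/K)$, negligible for the scaling of $K$ used in the surrounding expurgation argument. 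Both the upper and lower bounds from \Cref{thm:cramer-ldp} then match, and the combined error is a $K$-independent $o(n)$, completing the lemma. Equivalently, one can package the same computation through \Cref{thm:cramer-ldp}: writing $\ol\rad^2 = \sum_c \bfz_c$ with $\bfz_c$ i.i.d., the MGF $\expt{e^{\lambda\bfz_1}}$ equals, after the same change of variables, a Gaussian integral over $\bR^{L-1}$ whose Legendre transform at $\lambda^\ast = -(L-1)/(2N)$ recovers $E(K)$; the main obstacle (boundary effects in the $\vu$-integration) is the same either way.
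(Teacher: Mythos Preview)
Your approach is correct and takes a genuinely different route from the paper. The paper applies \cramer's theorem (\Cref{thm:cramer-ldp}) to reduce to the MGF $\expt{e^{-\lambda\vec\bfx^\top A\vec\bfx}}$ of the \emph{degenerate} $L\times L$ form $A=I_L-\tfrac{1}{L}J_L$, diagonalizes $A$ via an explicit orthogonal matrix, integrates out the null direction, and evaluates the remaining $(L-1)$-dimensional integral by Laplace's method (\Cref{thm:lap}) in the $K\to\infty$ regime before finally optimizing over $\lambda$. Your centroid-plus-differences substitution removes the degeneracy at the outset---yielding the full-rank $(L-1)\times(L-1)$ matrix $I_{L-1}+\mathbf{1}\mathbf{1}^\top$---so the event becomes an exact ellipsoid in $\bR^{n(L-1)}$ whose volume is closed-form via Stirling; no LDP, no Laplace. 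This is more elementary and transparent, whereas the paper's large-deviations framing may port more readily to other codeword distributions. You are also right that the exponent should read $-E(K)$: the paper itself uses $e^{-nE(K)}$ when it applies the lemma (cf.\ \Cref{eqn:expt-bad}). Finally, your boundary argument needs $K$ to grow with $n$ (so that $(1-r_n/K)^n$ stays subexponential), which matches the downstream choice $K=n^2$; but the paper's Laplace step is likewise only asymptotic in $K$, so the claimed $K$-independence of the $o(n)$ term is equally informal in both arguments.
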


First, we note that
\Cref{eqn:avg-rad-bd} can be alternatively written as 
\begin{align}
\prob{\ol\rad^2(\vbfx_1,\cdots,\vbfx_L)\le nN}
&= \prob{\frac{1}{L}\sum_{i = 1}^L\normtwo{\vbfx_i}^2 - \normtwo{\vbfx}^2\le nN} \label{eqn:apply-avg-rad-alternative} \\
&= \prob{\sum_{i = 1}^L\normtwo{\vbfx_i}^2 - L\normtwo{\vbfx}^2 \le LnN} \notag \\
&= \prob{\sum_{j = 1}^n\sum_{i = 1}^L\vbfx_i(j)^2 - L\sum_{j = 1}^n\paren{\frac{1}{L}\sum_{i = 1}^L\vbfx_i(j)}^2 \le LnN} \notag \\
&= \prob{ \sum_{j = 1}^n\paren{\sum_{i = 1}^L\vbfx_i(j)^2 - \frac{1}{L}\paren{\sum_{i = 1}^L\vbfx_i(j)}^2} \le LnN }, \label{eqn:to-use-quadratic-form}
\end{align}
where \Cref{eqn:apply-avg-rad-alternative} is by \Cref{eqn:avg-rad-alternative} and $ \vbfx \coloneqq \frac{1}{L}\sum_{i = 1}^L\vbfx_i $. 

Define 
\begin{align}
g(x_1, \cdots,x_L) \coloneqq& \sum_{i = 1}^Lx_i^2 - \frac{1}{L}\paren{\sum_{i = 1}^Lx_i}^2. \notag 
\end{align}
The above probability (\Cref{eqn:to-use-quadratic-form}) can be rewritten as 
\begin{align}
\prob{\sum_{j = 1}^ng(\vbfx_1(j), \cdots,\vbfx_L(j)) \le LnN} \label{eqn:avg-rad-bd-g}
\end{align}
where $ \vbfx_i(j)\iid\unif(\cI) $ for each $i\in[L]$ and $j\in[n]$. 

We note that the function $ g(\vec t) $ is a quadratic form of $ \vec t\in\bR^L $. 
Indeed, 
\begin{align}
g(\vec t) &= \sum_{i = 1}^L\vec t(i)^2 - \frac{1}{L}\paren{\sum_{i = 1}^L\vec t(i)}^2 
= \paren{1 - \frac{1}{L}} \sum_{i = 1}^L\vec t(i)^2 - \frac{2}{L} \sum_{i,j\in[L]: i<j} \vec t(i)\vec t(j) 
= \vec t^\top A\vec t, \label{eqn:def-g}
\end{align}
where 
\begin{align}
A \coloneqq& I_L - \frac{1}{L}J_L \in\bR^{L\times L} \notag
\end{align}
and $ I_L $ denotes the $ L\times L $ identity matrix and $ J_L $ denotes the $ L\times L $ all-one matrix. 
Therefore we can write \Cref{eqn:avg-rad-bd} as 
\begin{align}
\prob{\sum_{j = 1}^n\vec\bfx_j^\top A\vec\bfx_j \le LnN}, \label{eqn:ppp-to-bound-clean}
\end{align}
where $ \vec\bfx_j\coloneqq[\vbfx_1(j),\cdots,\vbfx_L(j)]\in\bR^L $ and $ \vbfx_i(j)\iid\unif(\cI) $ for each $ 1\le i\le L $ and $ 1\le j\le n $.

\subsection{Large deviation principle}
\label{sec:ldp}

Since $ \vec\bfx_j^\top A\vec\bfx_j $ is independent for each $ 1\le j\le n $, we can apply the large deviation principle (\Cref{thm:cramer-ldp}) to get the asymptotic behaviour of \Cref{eqn:ppp-to-bound-clean}.
Specifically, 
\begin{align}
& \frac{1}{n}\ln\Cref{eqn:ppp-to-bound-clean} \notag \\
&\xrightarrow{n\to\infty} - \max_{\lambda\le0} \curbrkt{ \lambda LN - \ln\expt{e^{\lambda \vec\bfx^\top A\vec\bfx}} } \notag \\
&= - \max_{\lambda\ge0} \curbrkt{ -\lambda LN - \ln\expt{e^{-\lambda \vec\bfx^\top A\vec\bfx}} }, \label{eqn:ldp-to-be-cont}
\end{align}
where $ \vec\bfx \sim\unif^\tl(\cI) $. 

We need to compute the following integral:
\begin{align}
\expt{e^{ - \lambda \vec\bfx^\top A\vec\bfx}} &= \frac{1}{(2K)^L} \int_{\cI^L} e^{-\lambda \vec x^\top A\vec x} \diff\vec x \notag \\
&= \frac{1}{(2K)^L} \int_{[-1,1]^L} e^{-\lambda K^2\vec t^\top A\vec t} K^L \diff\vec t \notag \\
&= \frac{1}{2^L} \int_{[-1,1]^L} e^{-K^2\lambda\vec t^\top A\vec t} \diff\vec t, \label{eqn:rewritten} 
\end{align}
where $ \lambda\ge0 $.

Note that $A\in\bR^{L\times L} $ has rank $L-1$ and therefore is singular, unfortunately. 
In fact, $A$ has eigendecomposition 
$A = PDP^{-1} $
where
\begin{align}
P \coloneqq& \begin{bmatrix}[c|c|c|c|c]
-1&-1&\cdots&-1&1 \\
&&&1&1\\
&&\iddots&&\vdots\\
&1&&&1\\
1&&&&1
\end{bmatrix} \in\bR^{L\times L} \notag 
\end{align}
consists of the eigenvectors of $A$ as its columns and
\begin{align}
D\coloneqq& \begin{bmatrix}
1&&&\\
&\ddots&&\\
&&1&\\
&&&0
\end{bmatrix} \in\bR^{L\times L}
\notag
\end{align}
consists of the eigenvalues of $A$ as its diagonal entries.
However, $P$ is not orthogonal. 
One can orthogonalize it using the Gram--Schmidt process which gives us an orthogonal matrix $U \in\bR^{L\times L} $. 
We claim that 
\begin{align}
U &= \begin{bmatrix}
-\frac{1}{\sqrt{1\times2}} & -\frac{1}{\sqrt{2\times3}} & -\frac{1}{\sqrt{3\times4}} & -\frac{1}{\sqrt{4\times5}} & \cdots & -\frac{1}{\sqrt{(L-2)\times(L-1)}} & -\frac{1}{\sqrt{(L-1)\times L}} & \frac{1}{\sqrt{L}} \\
&&&& &  & \sqrt{\frac{L-1}{L}} & \frac{1}{\sqrt{L}} \\
&&&&& \sqrt{\frac{L-2}{L-1}} & -\frac{1}{\sqrt{(L-1)\times L}} & \vdots \\
&&&& \iddots & -\frac{1}{\sqrt{(L-2)\times(L-1)}} & \vdots & \vdots \\
&&& \sqrt{\frac{4}{5}} & \vdots & \vdots & \vdots & \vdots \\
&& \sqrt{\frac{3}{4}} & -\frac{1}{\sqrt{4\times5}} & \vdots & \vdots & \vdots & \vdots \\
& \sqrt{\frac{2}{3}} & -\frac{1}{\sqrt{3\times4}} & \vdots & \vdots & \vdots & \vdots & \vdots \\
\sqrt{\frac{1}{2}} & -\frac{1}{\sqrt{2\times3}} & -\frac{1}{\sqrt{3\times4}} & -\frac{1}{\sqrt{4\times5}} & \vdots & -\frac{1}{\sqrt{(L-2)\times(L-1)}} & -\frac{1}{\sqrt{(L-1)\times L}} & \frac{1}{\sqrt{L}}
\end{bmatrix} \in \bR^{L\times L}
. \label{eqn:matrix-u}
\end{align}
The above $U$ gives us the Singular Value Decomposition of $A$ which is $ A = UDU^\top $.
Note that $ U^\top = U^{-1} $ by orthogonality of $U$ and the diagonalization of $A$ is given by $ D = U^{-1}AU^{-\top} = U^\top A U $. 
Under the change of variable $ \vec t = U\vec y $, the quadratic form $ \vec t^\top A\vec t $ becomes a diagonal form $ \vec y^\top D\vec y $ and the RHS of \Cref{eqn:rewritten} becomes
\begin{align}
& \frac{1}{2^L} \int_{[-1,1]^L} \exp\paren{-K^2\lambda\vec t^\top A\vec t} \diff \vec t \notag \\
&= \frac{1}{2^L} \int_{U^{-1}[-1,1]^L} \exp\paren{-K^2\lambda(U\vec y)^\top A (U\vec y)} \cdot |\det(U)| \diff\vec y \label{eqn:app-change-var} \\
&= \frac{1}{2^L} \int_{U^\top[-1,1]^L} \exp\paren{-K^2\lambda\vec y^\top (U^\top A U)\vec y} \diff\vec y \label{eqn:use-ortho} \\
&= \frac{1}{2^L} \int_{U^\top[-1,1]^L} \exp\paren{-K^2\lambda\vec y^\top D\vec y} \diff\vec y \notag \\
&= \frac{1}{2^L} \int_{U^\top[-1,1]^L} \exp\paren{-K^2 \lambda\sum_{i = 1}^{L-1}\vec t(i)^2} \diff\vec t . \label{eqn:how-to-comp} 
\end{align}
\Cref{eqn:app-change-var} is by \Cref{lem:change-var}. 
In \Cref{eqn:use-ortho}, we use the facts that $ U^{-1} = U^\top $ and $ |\det(U)| = 1 $.

\subsection{Laplace's method and proof of~\Cref{lemma:avg_rad_exponent}}
\label{sec:laplace}

To compute \Cref{eqn:how-to-comp}, we note that the integral is degenerate along the direction of the last coordinate $ \vec t(L) $. 
Since the integral domain is bounded, the integral is still finite. 
We first integrate out $ \vec t(L) $ and get an $(L-1)$-dimensional integral w.r.t.\ $ \vec t(1),\cdots,\vec t(L-1) $. 
To this end, observe that for $ \vec t\in U[-1,1]^L $, the last component $ \vec t(L) $ is a function of $ \vec t(1),\cdots,\vec t(L-1) $ and it can take any value of the last coordinate of $ U^\top[-1,1]^L $. 
Therefore the range of $ \vec t(L) $ can be written as $ [g_1(\vec t(1),\cdots,\vec t(L-1)), g_2(\vec t(1),\cdots,\vec t(L-1))] $ where $ g_1(\cdot) $ and $ g_2(\cdot) $ are piecewise linear continuous functions given by $ U $. 
We now integrate out $ \vec t(L) $ and get
\begin{align}
\int_{(U^\top[-1,1]^L)|_{[t_1,\cdots,t_{L-1}]}} e^{-K^2\lambda\sum_{i = 1}^{L-1}t_i^2}(g_2(t_1,\cdots,t_{L-1}) - g_1(t_1,\cdots,t_{L-1})) \diff(t_1,\cdots,t_{L-1}), \label{eqn:to-be-cont-lap} 
\end{align}
where $ (U^\top[-1,1]^L)|_{[t_1,\cdots,t_{L-1}]} \subset\bR^{L-1} $ denotes the set obtained by restricting each vector in $ U^\top[-1,1]^L\subset\bR^L $ to the first $L-1$ coordinates $ (t_1,\cdots,t_{L-1}) $. 

Note that the quadratic function $ f(t_1,\cdots,t_{L-1}) \coloneqq \lambda \sum_{i = 1}^{L-1} t_i^2 $ is nonnegative and attains its unique minimum (which is zero) at $ [t_1,\cdots,t_{L-1}] = [0,\cdots,0] $ which is in the interior of $ U^\top[-1,1]^L $. 
Therefore, by Laplace's method (\Cref{thm:lap}), \Cref{eqn:to-be-cont-lap} converges to 
\begin{align}
\paren{\frac{2\pi}{K^2}}^{\frac{L-1}{2}} \frac{g_2(0,\cdots,0) - g_1(0,\cdots,0)}{\sqrt{\det((\hess f)(0,\cdots,0))}} \notag 
\end{align}
as $ K\to\infty $. 
Since $ \hess f = 2\lambda I_{L-1}\succ 0 $, we have 
\begin{align}
& \paren{\frac{2\pi}{K^2}}^{\frac{L-1}{2}} \frac{g_2(0,\cdots,0) - g_1(0,\cdots,0)}{\sqrt{(2\lambda)^{L-1}}} \notag \\
&= \paren{\frac{\pi}{K^2\lambda}}^{\frac{L-1}{2}} (g_2(0,\cdots,0) - g_1(0,\cdots,0)). \notag  
\end{align}
Note that $ g_2(0,\cdots,0) - g_1(0,\cdots,0) $ is nothing but the length of the range of the last coordinate $ t_{L-1} $ of vectors in $ U^\top[-1,1]^L $. 
Since any vector $ \vec t\in U^\top[-1,1]^L $ can be written as $ U^\top\vec u $ for some $ \vec u\in[-1,1]^L $, the length of the range of the last coordinate of $ \vec t $ is twice the $ \ell_1 $-norm of the last row of $ U^\top $, i.e., the last column of $ U $. 
From \Cref{eqn:matrix-u}, it is not hard to see that 
\begin{align}
g_2(0,\cdots,0) - g_1(0,\cdots,0) &= 
2\cdot L\cdot\frac{1}{\sqrt{L}} 
= 2\sqrt{L}. 
\label{eqn:gl-def} 
\end{align}
Finally, we get that \Cref{eqn:to-be-cont-lap} (asymptotically) equals
\begin{align}
\paren{\frac{\pi}{K^2\lambda}}^{\frac{L-1}{2}} \cdot2\sqrt{L}. \label{eqn:lapace-result}
\end{align}

Recall that $E(K)$ is the error
 exponent corresponding to \Cref{eqn:avg-rad-bd}. 
Plugging \Cref{eqn:lapace-result} back to \Cref{eqn:how-to-comp} and then back to \Cref{eqn:ldp-to-be-cont}, we have
\begin{align}
E(K) &= \max_{\lambda\ge0}\curbrkt{-\lambda LN - \ln\paren{\frac{1}{2^L}\cdot \paren{\frac{\pi}{K^2\lambda}}^{\frac{L-1}{2}} \cdot2\sqrt{L}} } \notag \\
&= \max_{\lambda\ge0}\curbrkt{-\lambda LN - \ln\paren{\paren{\frac{\pi}{(2K)^2\lambda}}^{\frac{L-1}{2}} \sqrt{L}} } \notag \\
&= \max_{\lambda\ge0}\curbrkt{ -LN\lambda + \frac{L-1}{2}\ln\lambda + (L-1)\ln (2K) \mright. \notag \\
&\qquad\qquad \mleft. - \frac{L-1}{2}\ln\pi - \frac{1}{2}\ln L} \notag \\
&= \max_{\lambda\ge0}\curbrkt{-LN\lambda + \frac{L-1}{2}\ln\lambda} + (L-1)\ln (2K) \notag \\
& \qquad - \frac{L-1}{2}\ln\pi - \frac{1}{2}\ln L \label{eqn:to-be-max} \\
&= -\frac{L-1}{2} + \frac{L-1}{2}\ln\frac{L-1}{2LN} - \frac{L-1}{2}\ln\pi \notag \\
&\qquad + (L-1)\ln (2K) - \frac{1}{2}\ln L \label{eqn:max-lambda} \\
&= \frac{L-1}{2}\ln\frac{L-1}{2\pi eNL} - \frac{1}{2}\ln L + (L-1)\ln (2K). \label{eqn:ldp-result}
\end{align}
\Cref{eqn:max-lambda} follows since the function of $\lambda$ in the maximization in \Cref{eqn:to-be-max} is convex and attains its maximum at $ \lambda = \frac{L-1}{2LN} $. This completes the proof.
\qed

\subsection{Finite codebook with minimum average squared radius $nN$ and proof of~\Cref{lemma:finitecodebook_avgrad}}
\label{sec:together}

Since the number $M$ of points in $ \cA $ is $ \lambda_n|\cA| = \lambda_n(2K)^n $, the expected number of lists with average squared radius at most $ nN $ is 
\begin{align}
& \exptover{}{\card{\curbrkt{\cL\in\binom{\cC_K'}{L}:\ol\rad^2(\cL)\le nN}}} \notag \\
&= 
\exptover{\vbfx_1,\cdots,\vbfx_M}{\card{\curbrkt{\cL\in\binom{[M]}{L}:\ol\rad^2(\curbrkt{\vbfx_i}_{i\in\cL})\le nN}} 
} \label{eqn:cond-expt} \\
  &=  \binom{M}{L}e^{-nE(K)}  \notag \\
  &= \frac{M^Le^{o(n)}}{L!}e^{-nE(K)} \notag\\
&= \frac{\lambda_n^L(2K)^{nL}e^{-nE(K)+o(n)}}{L!}. \label{eqn:expt-bad}
\end{align}

We now set $ \lambda_n $ in such a way that \Cref{eqn:expt-bad} is at most $ \frac{1}{2}\expt{M} = \frac{1}{2}\lambda_n|\cA| = \frac{1}{2}\lambda_n(2K)^n $. 
That is, 
\begin{align}
&& \frac{1}{2} \lambda_n(2K)^n &\ge \frac{\lambda_n^L(2K)^{nL}e^{-nE(K)+o(n)}}{L!} \notag \\
\impliedby&& L!/2 &\ge \lambda_n^{L-1} (2K)^{(L-1)n} e^{-nE(K)+o(n)} \notag \\
\impliedby&& \lambda_n &\le (L!/2)^{\frac{1}{L-1}}\frac{e^{nE(K)/(L-1)+o(n)}}{(2K)^n} \notag \\
&& &= (L!/2)^{\frac{1}{L-1}} e^{n\paren{\frac{E(K)}{L-1} - \ln(2K)}+o(n)} \notag \\
&& &= (L!/2)^{\frac{1}{L-1}} \exp\paren{n\paren{ \frac{1}{2}\ln\frac{L-1}{2\pi eNL} -\frac{1}{2(L-1)}\ln L }+o(n)}. \label{eqn:lambda-ppp}
\end{align}
After expurgating out one codeword from each bad list, we get an $ (N,L-1) $-average-radius multiple packing $\cC_K$ of size at least $ \frac{1}{2}\expt{M} $ and the density $\frac{1}{n}\ln\frac{|\cC_K|}{|\cA|}$ is therefore at least
\begin{align}
\frac{1}{n}\ln\frac{\frac{1}{2}\expt{M}}{|\cA|} &= \frac{1}{n}\ln\frac{\frac{1}{2}\lambda_n|\cA|}{|\cA|} = \frac{1}{n}\ln\frac{\lambda_n}{2} . 
\label{eqn:put-together} 
\end{align}
Substituting \Cref{eqn:lambda-ppp} here, we get the following lower bound on the density
\begin{align}
\frac{1}{2}\ln\frac{L-1}{2\pi eNL} - \frac{1}{2(L-1)}\ln L +o(1), \label{eqn:desired-density}
\end{align}
as promised in \Cref{lemma:finitecodebook_avgrad}. 

\subsection{Unbounded packing and proof of~\Cref{thm:lb-ppp}}
\label{sec:unbdd-packing}


The above derivation shows the existence of a \emph{finite} codebook $ \cC\cap[-K,K]^n $ in which all $L$-tuple of points have radius at least $\sqrt{nN}$. 
To obtain an \emph{unbounded} $(N,L-1)$-packing, let us take $ K=n^2 $ and define $\cC_n \coloneqq \cC\cap[-n^2,n^2]^n$ and 
\begin{align}
\cC_\infty &\coloneqq \cC_n + (n^2 + n^{0.6}) \bZ^n . \notag 
\end{align}
In words, $ \cC_\infty $ is obtained by tiling $ \bR^n $ using translations of $ \cC_n $ and leaving a gap of width $ 2\cdot n^{0.6} $ between adjacent copies of $ \cC_n $. 
The NLD of $ \cC_\infty $ is essentially the same as that of $ \cC\cap[-n^2,n^2]^n $ which is given by \Cref{eqn:desired-density}. 
Indeed, since $ \cC_\infty $ is periodic, we have
\begin{align}
	R(\cC_\infty) &= \frac{1}{n} \ln \frac{|\cC_n|}{\abs{[-(n^2+n^{0.6}),(n^2+n^{0.6})]^n}} 
	= \frac{1}{n} \ln \frac{|\cC_n|}{\abs{[-n^2,n^2]^n}} + \frac{1}{n}\ln\frac{(2n^2)^n}{(2(n^2+n^{0.6}))^n} 
	\xrightarrow{n\to\infty} R(\cC_n) . \notag 
\end{align}
Moreover, we claim that $ \cC_\infty $ is an $(N,L-1)$-packing. 
To see this, take any $ \cL\subset\binom{\cC_\infty}{L} $. 
If $ \cL\subset\cC_n + \vz $ for some $ \vz\in(n^2+n^{0.6})\bZ^n $, then $ \rad(\cL)\ge\sqrt{nN} $ by the guarantee of $ \cC_n $. 
Otherwise, there exist two points $ \vx_1,\vx_2\in\cL $ such that $ \vx_1\in\cC_n + \vz_1 $ and $ \vx_2\in\cC_n + \vz_2 $ for two distinct $ \vz_1\ne\vz_2\in(n^2+n^{0.6})\bZ_n $. 
Then 
\begin{align}
\rad(\cL) &\ge \frac{1}{2}\normtwo{\vx_1 - \vx_2} \ge n^{0.6} \ge \sqrt{nN}. \notag 
\end{align}
Therefore, we obtain an $(N,L-1)$-packing $ \cC_\infty $ of NLD asymptotically equal to \Cref{eqn:desired-density}. 
The proof of \Cref{thm:lb-ppp} is complete. 


\subsection{{Alternate approaches for bounding \Cref{eqn:ppp-to-bound-clean}  }}
We managed to compute the exact asymptotics (up to lower order terms in the exponent) of the tail probability given by \Cref{eqn:ppp-to-bound-clean}. 
The way we did so is by applying the large deviation principle and performing ad hoc calculations on the moment generating function of the random quadratic form of interest. 
From the perspective of concentration of measure, the tail probability we computed can be cast from several different angles: 
\begin{enumerate}
	\item Gaussian integral w.r.t.\ a general (non-identity) degenerate covariance matrix $A$;
	\item Concentration of the uniform measure on a solid cube (a useful trick for which is to push it forward to the Gaussian measure and apply Lipschitz concentration \cite{bobkov-2010-concentration-on-cube});
	\item The (standard) Gaussian measure of a parallelepiped defined by the linear transformation $U$; 
	\item The probability that a Gaussian (with zero mean and general covariance matrix) lies in a cube;
	\item Hanson--Wright inequality for quadratic forms in subgaussian random vectors \cite{vershynin-2018-high-dim-prob-book} which, in our case, are uniform vectors in a solid cube. 
\end{enumerate}
We tried all the above techniques. 
However, they do not seem to yield the correct exponent, at least in their vanilla forms, though they may give certain exponentially decaying bounds. 
Therefore, we feel that \Cref{eqn:ppp-to-bound-clean} is a cute example for which standard concentration tools are not able to produce the optimal bound.

\subsection{Connections to \cite{blinovsky-2005-random-packing}}
\label{sec:bli-mistake-ppp}
The paper \cite{blinovsky-2005-random-packing} analyzed the list-decodability of expurgated PPPs and  arrived at the same bound (\Cref{eqn:lb-ppp}) as ours, and in fact the current paper was inspired by~\cite{blinovsky-2005-random-packing}.

However, there were some gaps in the proof of~\cite{blinovsky-2005-random-packing} that we were not able to resolve.
In the paper, it was shown that for every sufficiently large $K>0$, there exists an (infinite) codebook $\cC$ obtained by expurgating a PPP such that every $L$-tuple of points in $ \cC\cap[-K,K]^n $ has radius at least $ \sqrt{nN} $. 
However, we could not find a rigorous way to pass to the limit as $K\to\infty$ and argue that $\cC$ itself as an \emph{infinite} point set is an $(N,L-1)$-multiple packing. 
Indeed, it is claimed in \cite[Note 5]{shlosman-tsfasman-2000-random-packing} that $ \cC\cap[-K,K]^n $ may not converge as $K\to\infty$. 

Although our proof also involves analyzing the tail probability of the average squared radius (\Cref{eqn:avg-rad-bd}), our techniques are different, as outlined below. 

Let $ \cC $ be a PPP (without expurgation yet) and $ K\in\bR $. 
Let $ \vbfx_1,\cdots,\vbfx_L\in\cC\cap[-K,K]^n $ be an $L$-list.
Recall that they are independent and uniformly distributed in $ [-K,K]^n $. 
Let $ \vbfx = \frac{1}{L}\sum_{i=1}^L\vbfx_i $ denote the centroid of the list. 
To compute \Cref{eqn:avg-rad-bd}, \cite{blinovsky-2005-random-packing} claimed that we could use an orthogonal transformation $ \vbfx_i\mapsto\vbfu_i $ ($ 1\le i\le L $) to the list so that $ \frac{1}{L}\normtwo{\vbfu_1} = \normtwo{\vbfu} $ where $ \vbfu \coloneqq \frac{1}{L}\sum_{i = 1}^L\normtwo{\vbfu_i} $. This is in contrast to our approach. 
However, it should be noted that an orthogonal transformation only reflects and/or rotates the list, but does not translate it.

From our understanding of the paper \cite{blinovsky-2005-random-packing}, the ideas can be interpreted as follows.
Since the average squared radius is invariant under rigid transformations (i.e., translations, rotations, reflections and their combination) and a homogeneous PPP is stationary and isotropic, \cite{blinovsky-2005-random-packing} attempts to transform the list rigidly so that the resulting average squared radius admits a simpler expression and the list is still independent and uniformly distributed in the cube. However, it appears that such a rigid transformation does not exist. We instead use a different, much simpler approach by first constructing a finite codebook and then tiling this. The high-level construction is similar to~\cite{poltyrev1994coding} which was originally studied for the problem of reliable communication over additive-white Gaussian noise channels.

\section{Open questions}
\label{sec:open}

We end the paper with several intriguing open questions.
\begin{enumerate}
	\item \label{itm:lp-packing}
	The problem of packing spheres in $ \ell_p $ space was also addressed in the literature \cite{rankin-sphericalcap-1955,spence-1970-lp-packing,ball-1987-lp-packing,samorodnitsky-l1}. 
	Recently, there was an exponential improvement on the optimal packing density in $ \ell_p $ space \cite{sah-2020-lp-sphere-packing} relying on the Kabatiansky--Levenshtein bound \cite{kabatiansky-1978}. 
	It is worth exploring the $ \ell_p $ version of the multiple packing problem. 
	One obstacle here is that the $ \ell_p $ average radius does not admit a closed form expression unlike the $p=2$ case.
	
	
	
	
	
	\item \label{itm:ld-cap-avgrad} 
	In this paper, we treat (regular) list-decoding and average-radius list-decoding as two different notions and obtain bounds for the latter (which automatically lower bounds the former). 
	It follows from our bounds that 
	the largest multiple packing density under these two notions coincide as $ L\to\infty $. 
	However, as far as we know, it is unknown whether the largest multiple packing density under standard and average-radius list-decoding is the same for any \emph{finite} $L$.


\end{enumerate}

			

\section{Acknowledgement}
YZ thanks Jiajin Li for making the observation given by \Cref{eqn:cheb-rad-interchange}. 
He also would like to thank Nir Ailon and Ely Porat for several helpful conversations throughout this project, and Alexander Barg for insightful comments on the manuscript. 

YZ has received funding from the European Union's Horizon 2020 research and innovation programme under grant agreement No 682203-ERC-[Inf-Speed-Tradeoff]. 
The work of SV was supported by a seed grant from IIT Hyderabad and the start-up research grant from the Science and Engineering Research Board, India (SRG/2020/000910). 


\bibliographystyle{alpha}
\bibliography{ref} 

\newcommand{\etalchar}[1]{$^{#1}$}
\begin{thebibliography}{BADTS20}

\bibitem[AB08]{ahlswede-blinovsky-2008-book}
Rudolf Ahlswede and Vladimir Blinovsky.
\newblock {\em Lectures on advances in combinatorics}.
\newblock Universitext. Springer-Verlag, Berlin, 2008.

\bibitem[ABL00]{abl-2000-list-size-2}
Alexei Ashikhmin, Alexander Barg, and Simon Litsyn.
\newblock A new upper bound on codes decodable into size-2 lists.
\newblock In {\em Numbers, Information and Complexity}, pages 239--244.
  Springer, 2000.

\bibitem[BADTS20]{ben-aroya-doron-ta-shma-2018-explicit-erasure-ld}
Avraham Ben-Aroya, Dean Doron, and Amnon Ta-Shma.
\newblock Near-optimal erasure list-decodable codes.
\newblock In {\em 35th Computational Complexity Conference (CCC 2020)}. Schloss
  Dagstuhl-Leibniz-Zentrum f{\"u}r Informatik, 2020.

\bibitem[Bal87]{ball-1987-lp-packing}
Keith Ball.
\newblock Inequalities and sphere-packing inl p.
\newblock {\em Israel Journal of Mathematics}, 58(2):243--256, 1987.

\bibitem[BBJ19]{bhattacharya2019shared}
Sagnik Bhattacharya, Amitalok~J Budkuley, and Sidharth Jaggi.
\newblock Shared randomness in arbitrarily varying channels.
\newblock In {\em 2019 IEEE International Symposium on Information Theory
  (ISIT)}, pages 627--631. IEEE, 2019.

\bibitem[BF63]{blachman-few-1963-multiple-packing}
NM~Blachman and L~Few.
\newblock Multiple packing of spherical caps.
\newblock {\em Mathematika}, 10(1):84--88, 1963.

\bibitem[Bli86]{blinovsky-1986-ls-lb-binary}
Vladimir~M Blinovsky.
\newblock {Bounds for codes in the case of list decoding of finite volume}.
\newblock {\em {Problems of Information Transmission}}, 22:7--19, 1986.

\bibitem[Bli99]{blinovsky-1999-list-dec-real}
V~Blinovsky.
\newblock Multiple packing of the euclidean sphere.
\newblock {\em IEEE Transactions on Information Theory}, 45(4):1334--1337,
  1999.

\bibitem[Bli05a]{blinovsky-2005-ls-lb-qary}
Vladimir~M Blinovsky.
\newblock {Code bounds for multiple packings over a nonbinary finite alphabet}.
\newblock {\em {Problems of Information Transmission}}, 41:23--32, 2005.

\bibitem[Bli05b]{blinovsky-2005-random-packing}
Vladimir~M Blinovsky.
\newblock Random sphere packing.
\newblock {\em Problems of Information Transmission}, 41(4):319--330, 2005.

\bibitem[Bli08]{blinovsky-2008-ls-lb-qary-supplementary}
Vladimir~M Blinovsky.
\newblock {On the convexity of one coding-theory function}.
\newblock {\em {Problems of Information Transmission}}, 44:34--39, 2008.

\bibitem[Bli12]{blinovsky2012book}
Volodia Blinovsky.
\newblock {\em Asymptotic combinatorial coding theory}, volume 415.
\newblock Springer Science \& Business Media, 2012.

\bibitem[Bob10]{bobkov-2010-concentration-on-cube}
Sergey~G Bobkov.
\newblock On concentration of measure on the cube.
\newblock {\em Journal of Mathematical Sciences}, 165(1):60--70, 2010.

\bibitem[CKM{\etalchar{+}}17]{cohn-2017-24spherepacking}
Henry Cohn, Abhinav Kumar, Stephen~D Miller, Danylo Radchenko, and Maryna
  Viazovska.
\newblock The sphere packing problem in dimension 24.
\newblock {\em Annals of Mathematics}, pages 1017--1033, 2017.

\bibitem[CS13]{conway-sloane-book}
John~Horton Conway and Neil James~Alexander Sloane.
\newblock {\em {Sphere packings, lattices and groups}}, volume 290.
\newblock Springer Science \& Business Media, 2013.

\bibitem[Del73]{delsarte-1973}
Philippe Delsarte.
\newblock An algebraic approach to the association schemes of coding theory.
\newblock {\em Philips Res. Rep. Suppl.}, 10:vi+--97, 1973.

\bibitem[Eli57]{elias-1957-listdec}
Peter Elias.
\newblock {\em List decoding for noisy channels}.
\newblock Massachusetts Institute of Technology, Research Laboratory of
  Electronics, Cambridge, Mass., 1957.
\newblock Rep. No. 335.

\bibitem[GHS20]{guruswami-2020-listdec-insdel}
Venkatesan Guruswami, Bernhard Haeupler, and Amirbehshad Shahrasbi.
\newblock Optimally resilient codes for list-decoding from insertions and
  deletions.
\newblock In {\em Proceedings of the 52nd Annual ACM SIGACT Symposium on Theory
  of Computing}, pages 524--537, 2020.

\bibitem[Gil52]{gilbert1952}
Edgar~N Gilbert.
\newblock A comparison of signalling alphabets.
\newblock {\em The Bell system technical journal}, 31(3):504--522, 1952.

\bibitem[Gop77]{goppa1977}
Valerii~Denisovich Goppa.
\newblock Codes associated with divisors.
\newblock {\em Problemy Peredachi Informatsii}, 13(1):33--39, 1977.

\bibitem[GP12]{grigorescu-peikert-2012-list-dec-barnes-wall}
Elena Grigorescu and Chris Peikert.
\newblock List decoding barnes-wall lattices.
\newblock In {\em 2012 IEEE 27th Conference on Computational Complexity}, pages
  316--325. IEEE, 2012.

\bibitem[Gur06]{guruswami-it2003}
V~Guruswami.
\newblock List decoding from erasures: bounds and code constructions.
\newblock {\em IEEE Transactions on Information Theory}, 49(11):2826--2833,
  2006.

\bibitem[HAB{\etalchar{+}}17]{hales2017formal}
Thomas Hales, Mark Adams, Gertrud Bauer, Tat~Dat Dang, John Harrison, Hoang
  Le~Truong, Cezary Kaliszyk, Victor Magron, Sean McLaughlin, Tat~Thang Nguyen,
  et~al.
\newblock A formal proof of the kepler conjecture.
\newblock In {\em Forum of mathematics, Pi}, volume~5. Cambridge University
  Press, 2017.

\bibitem[HF11]{hales1998kepler}
Thomas Hales and Samuel Ferguson.
\newblock {\em The {K}epler conjecture}.
\newblock Springer, New York, 2011.
\newblock The Hales-Ferguson proof, Including papers reprinted from Discrete
  Comput. Geom. {{\bf{3}}6} (2006), no. 1, Edited by Jeffrey C. Lagarias.

\bibitem[HK19]{hosseinigoki-kosut-2018-oblivious-gaussian-avc-ld}
Fatemeh Hosseinigoki and Oliver Kosut.
\newblock List-decoding capacity of the {G}aussian arbitrarily-varying channel.
\newblock {\em Entropy}, 21(6):Paper No. 575, 16, 2019.

\bibitem[Hug97]{hughes-1997-list-avc}
Brian~L. Hughes.
\newblock The smallest list for the arbitrarily varying channel.
\newblock {\em IEEE Transactions on Information Theory}, 43(3):803--815, 1997.

\bibitem[Jos58]{joshi1958singleton}
D.~D. Joshi.
\newblock A note on upper bounds for minimum distance codes.
\newblock {\em Information and Control}, 1:289--295, 1958.

\bibitem[Kep11]{kepler-1611}
Johannes Kepler.
\newblock Strena seu de nive sexangula (the six-cornered snowflake).
\newblock {\em Frankfurt: Gottfried. Tampach}, 1611.

\bibitem[KL78]{kabatiansky-1978}
Grigorii~Anatolevich Kabatiansky and Vladimir~Iosifovich Levenshtein.
\newblock {On bounds for packings on a sphere and in space}.
\newblock {\em Problemy Peredachi Informatsii}, 14(1):3--25, 1978.

\bibitem[KL95]{kalai-linial-1995-distance-distribution}
Gil Kalai and Nathan Linial.
\newblock On the distance distribution of codes.
\newblock {\em IEEE Transactions on Information Theory}, 41(5):1467--1472,
  1995.

\bibitem[Kom53]{komamiya1953singleton}
Y~Komamiya.
\newblock Application of logical mathematics to information theory.
\newblock {\em Proc. 3rd Japan. Nat. Cong. Appl. Math}, 437, 1953.

\bibitem[Lan04]{langberg-focs2004}
M.~Langberg.
\newblock Private codes or succinct random codes that are (almost) perfect.
\newblock In {\em 45th Annual IEEE Symposium on Foundations of Computer
  Science}, pages 325--334, 2004.

\bibitem[Lit99]{litsyn-1999}
Simon Litsyn.
\newblock New upper bounds on error exponents.
\newblock {\em IEEE Transactions on Information Theory}, 45(2):385--398, 1999.

\bibitem[Min10]{minkowski-sphere-pack}
Hermann Minkowski.
\newblock {\em Geometrie der zahlen}.
\newblock BG Teubner, 1910.

\bibitem[MP22]{mook-peikert-2020-lattice}
Ethan Mook and Chris Peikert.
\newblock Lattice (list) decoding near {M}inkowski's inequality.
\newblock {\em IEEE Trans. Inform. Theory}, 68(2):863--870, 2022.

\bibitem[MRRW77]{mrrw2}
Robert McEliece, Eugene Rodemich, Howard Rumsey, and Lloyd Welch.
\newblock New upper bounds on the rate of a code via the delsarte-macwilliams
  inequalities.
\newblock {\em IEEE Transactions on Information Theory}, 23(2):157--166, 1977.

\bibitem[Pol94]{poltyrev1994coding}
Gregory Poltyrev.
\newblock On coding without restrictions for the awgn channel.
\newblock {\em IEEE Transactions on Information Theory}, 40(2):409--417, 1994.

\bibitem[Pol16]{polyanskiy-2016-list-dec}
Yury Polyanskiy.
\newblock Upper bound on list-decoding radius of binary codes.
\newblock {\em IEEE Transactions on Information Theory}, 62(3):1119--1128,
  2016.

\bibitem[PZ21]{polyanskii-zhang-2021-z}
Nikita Polyanskii and Yihan Zhang.
\newblock Codes for the z-channel.
\newblock {\em arXiv preprint arXiv:2105.01427}, 2021.

\bibitem[Ran55]{rankin-sphericalcap-1955}
R.A. Rankin.
\newblock {The closest packing of spherical caps in n dimensions}.
\newblock {\em Proc.\ Glasgow Math.\ Assoc.}, 2:139--144, 1955.

\bibitem[RS60]{reed-solomon}
Irving~S Reed and Gustave Solomon.
\newblock Polynomial codes over certain finite fields.
\newblock {\em Journal of the society for industrial and applied mathematics},
  8(2):300--304, 1960.

\bibitem[RYZ22]{resch-yuan-zhang-plotkin}
Nicolas Resch, Chen Yuan, and Yihan Zhang.
\newblock Zero-rate thresholds and new capacity bounds for list-decoding and
  list-recovery.
\newblock {\em arXiv preprint arXiv:2210.07754}, 2022.

\bibitem[Sam13]{samorodnitsky-l1}
Alex Samorodnitsky.
\newblock A bound on l1 codes.
\newblock \url{https://www.cs.huji.ac.il/~salex/papers/L1_codes.pdf}, 2013.

\bibitem[Sar08]{sarwate-thesis}
Anand~D. Sarwate.
\newblock {\em Robust and adaptive communication under uncertain interference}.
\newblock PhD thesis, EECS Department, University of California, Berkeley, Jul
  2008.

\bibitem[SG12]{sarwate-gastpar-2012-listdec}
Anand~D Sarwate and Michael Gastpar.
\newblock List-decoding for the arbitrarily varying channel under state
  constraints.
\newblock {\em IEEE transactions on information theory}, 58(3):1372--1384,
  2012.

\bibitem[Sin64]{singleton1964}
Richard Singleton.
\newblock Maximum distance q-nary codes.
\newblock {\em IEEE Transactions on Information Theory}, 10(2):116--118, 1964.

\bibitem[Spe70]{spence-1970-lp-packing}
E~Spence.
\newblock Packing of spheres in lp.
\newblock {\em Glasgow Mathematical Journal}, 11(1):72--80, 1970.

\bibitem[SSSZ20]{sah-2020-lp-sphere-packing}
Ashwin Sah, Mehtaab Sawhney, David Stoner, and Yufei Zhao.
\newblock Exponential improvements for superball packing upper bounds.
\newblock {\em Advances in Mathematics}, 365:107056, 2020.

\bibitem[ST01]{shlosman-tsfasman-2000-random-packing}
Senya Shlosman and Michael~A. Tsfasman.
\newblock Random lattices and random sphere packings: typical properties.
\newblock {\em Mosc. Math. J.}, 1(1):73--89, 2001.

\bibitem[Thu11]{thue1911-2dspherepacking}
Axel Thue.
\newblock {\em {\ "U} about the densest compilation of congruent circles in a
  plane}.
\newblock Number~1. J. Dybwad, 1911.

\bibitem[T{\'o}t40]{toth-1940-2dspherepacking}
L~Fejes T{\'o}th.
\newblock Uber einen geometrischen satz.
\newblock {\em Math}, 2(46):79--83, 1940.

\bibitem[TVZ82]{tvz}
Michael~A Tsfasman, SG~Vl{\u{a}}dutx, and Th~Zink.
\newblock Modular curves, shimura curves, and goppa codes, better than
  varshamov-gilbert bound.
\newblock {\em Mathematische Nachrichten}, 109(1):21--28, 1982.

\bibitem[Var57]{varshamov1957}
Rom~Rubenovich Varshamov.
\newblock Estimate of the number of signals in error correcting codes.
\newblock {\em Docklady Akad. Nauk, SSSR}, 117:739--741, 1957.

\bibitem[Ver18]{vershynin-2018-high-dim-prob-book}
Roman Vershynin.
\newblock {\em High-dimensional probability: An introduction with applications
  in data science}, volume~47.
\newblock Cambridge university press, 2018.

\bibitem[Via17]{viazovska-2017-8dspherepacking}
Maryna~S Viazovska.
\newblock The sphere packing problem in dimension 8.
\newblock {\em Annals of Mathematics}, pages 991--1015, 2017.

\bibitem[Woz58]{wozencraft-1958-listdec}
John~M Wozencraft.
\newblock List decoding.
\newblock {\em Quarterly Progress Report}, 48:90--95, 1958.

\bibitem[ZBJ20]{zhang-2019-list-dec-general}
Yihan Zhang, Amitalok~J. Budkuley, and Sidharth Jaggi.
\newblock {Generalized List Decoding}.
\newblock In Thomas Vidick, editor, {\em 11th Innovations in Theoretical
  Computer Science Conference (ITCS 2020)}, volume 151 of {\em Leibniz
  International Proceedings in Informatics (LIPIcs)}, pages 51:1--51:83,
  Dagstuhl, Germany, 2020. Schloss Dagstuhl--Leibniz-Zentrum fuer Informatik.

\bibitem[ZJB20]{zhang-2020-obli-list-dec}
Yihan Zhang, Sidharth Jaggi, and Amitalok~J Budkuley.
\newblock {Tight List-Sizes for Oblivious AVCs under Constraints}.
\newblock {\em arXiv preprint arXiv:2009.03788}, 2020.

\bibitem[ZV22a]{zhang-ppp-isit}
Yihan Zhang and Shashank Vatedka.
\newblock List-decodability of poisson point processes.
\newblock In {\em 2022 IEEE International Symposium on Information Theory
  (ISIT)}, pages 2559--2564, 2022.

\bibitem[ZV22b]{zhang-vatedka-2019-listdecreal}
Yihan Zhang and Shashank Vatedka.
\newblock List decoding random euclidean codes and infinite constellations.
\newblock {\em IEEE Transactions on Information Theory}, pages 1--1, 2022.

\bibitem[ZV22c]{zhang-split-misc}
Yihan Zhang and Shashank Vatedka.
\newblock Multiple packing: Lower and upper bounds.
\newblock {\em arXiv preprint arXiv:2211.04406}, 2022.

\bibitem[ZV22d]{zhang-split-ee}
Yihan Zhang and Shashank Vatedka.
\newblock Multiple packing: Lower bounds via error exponents.
\newblock {\em arXiv preprint arXiv:2211.04408}, 2022.

\bibitem[ZVJ20]{zhang-2020-twoway}
Yihan Zhang, Shashank Vatedka, and Sidharth Jaggi.
\newblock Quadratically constrained two-way adversarial channels.
\newblock {\em arXiv preprint arXiv:2001.02575}, 2020.

\bibitem[ZVJS22]{zhang-quadratic-arxiv}
Yihan Zhang, Shashank Vatedka, Sidharth Jaggi, and Anand~D. Sarwate.
\newblock Quadratically constrained myopic adversarial channels.
\newblock {\em IEEE Transactions on Information Theory}, 68(8):4901--4948,
  2022.

\end{thebibliography}

\end{document}